\newtheorem{thm}{Theorem}[section]
\newtheorem{cor}[thm]{Corollary}
\newtheorem{lem}[thm]{Lemma}
\newtheorem{rem}[thm]{Remark}
\newcommand{\A}{{\mathcal A}}
\newcommand{\C}{{\mathcal C}}
\newcommand{\D}{{\mathcal D}}
\newcommand{\E}{{\mathcal E}}
\newcommand{\M}{{\mathcal M}}
\newcommand\Rep{\operatorname{Rep}}
\newcommand{\Irr}{\operatorname{Irr}}
\newcommand\FPdim{\operatorname{FPdim}}
\newcommand\vect{\operatorname{Vec}}
\newcommand\SuperV{\operatorname{SuperVec}}
\begin{document}
\title[Integral almost square-free modular categories]{Integral almost square-free modular categories}
\author[Dong]{Jingcheng Dong}
\email[Dong]{dongjc@njau.edu.cn}
\address[Dong]{College of Engineering, Nanjing Agricultural University, Nanjing, Jiangsu 210031, China}

\author[Li]{Libin Li}
\email[Li]{lbli@yzu.edu.cn}
\address[Li]{Department of Mathematics, Yangzhou University, Yangzhou, Jiangsu 225002, China}

\author[Dai]{Li Dai}
\email[Dai]{daili1980@njau.edu.cn}
\address[Dai]{College of Engineering, Nanjing Agricultural University, Nanjing, Jiangsu 210031, China}

\keywords{modular category; group-theoretical fusion category; solvable fusion category; equivariantization; Frobenius-Perron dimension}

\subjclass[2010]{18D10; 16T05}

\date{\today}


\begin{abstract}
We study integral almost square-free modular categories; i.e., integral modular categories of Frobenius-Perron dimension $p^nm$, where $p$ is a prime number, $m$ is a square-free natural number and ${\rm gcd}(p,m)=1$. We prove that if $n\leq 5$ or $m$ is prime with $m<p$ then they are group-theoretical. This generalizes several results in the literature and gives a partial answer to the question posed by the first author and H. Tucker. As an application, we prove that an integral modular category whose Frobenius-Perron dimensions is odd and less than $1125$ is group-theoretical.
\end{abstract}

 \maketitle



\section{Introduction}\label{sec1}
Modular categories arise from many areas of mathematics and physics, including representation theory of quantum groups \cite{BaKi2001lecture}, vertex operator algebras \cite{Huang2005vertex}, von Neumann algebras \cite{EvKa1998quantum}, topological quantum field theory \cite{Turaev1994quantum} and conformal field theory \cite{MoSe1989Classical}. The importance of modular categories also comes from their applications in condensed matter physics and quantum computing \cite{Wang2010Top}.

A fusion category is called \textbf{almost square-free (ASF)} if its \textbf{Frobenius-Perron (FP)} dimension has the form $p^nm$, where $p$ is a prime number, $m$ is a square-free natural number and ${\rm gcd}(p,m)=1$. Several examples of this class of fusion categories have been extensively studied in the literature, see \cite{bruillard2013classification, DoTu2015, dong2013semisimple, etingof2005fusion, naidu2011finiteness}.

One important class of fusion categories is that of group-theoretical fusion categories. As described by Etingof, Nikshych and Ostrik \cite{etingof2005fusion}, a group-theoretical fusion category can be explicitly constructed from finite group data (see Section \ref{sec25}). Therefore this class of fusion categories can be viewed as the simplest fusion categories except the pointed ones. This is similar in spirit to the work of classifying finite-dimensional Hopf algebras: if a finite-dimensional Hopf algebra can be obtained from group algebras or dual group algebras then it must be well-understood. The main task of this paper is to determine when an integral ASF modular category is group-theoretical.

The paper is organized as follows. In Section \ref{sec2}, we recall some basic definitions and results which will be used throughout.

In Section \ref{sec3}, we study the existence of non-trivial Tannakian subcategories and their FP dimensions in an integral ASF modular category. As one of the consequences, we get that any integral ASF modular categories with FP dimension $p^nm$ is group-theoretical if $n\leq 5$. This generalizes the work in a series of papers \cite{bruillard2013classification, DoTu2015, naidu2011finiteness}.


In Section \ref{sec5}, we study the core of an integral ASF modular category. This is a new notion introduced in \cite{drinfeld2010braided}. We prove that the core of an integral ASF modular category is a pointed modular category. As a consequence, we get the structure of an integral ASF modular category: it is an equivariantization of a nilpotent fusion category of nilpotency class $2$. As an application, we get that an integral modular category with FP dimension $p^nq$, where $q<p$ are prime numbers, is group-theoretical. This gives a partial answer to the question posed in \cite{DoTu2015}.

In Section \ref{sec6}, we apply the results obtained so far to a low-dimensional integral modular category $\C$. We find that if the FP dimension $\FPdim(\C)$ is odd and less than 1125 then $\C$ is ASF except the case when $\FPdim(\C)=675$. We prove that if $\FPdim(\C)=675$ then $\C$ is a pointed modular category. This allows us to conclude that integral modular categories whose Frobenius-Perron dimensions are odd and less than $1125$ are group-theoretical.

\section{Preliminaries}\label{sec2}
We shall work over an algebraically closed field $k$ of characteristic zero. A fusion category is an abelian $k$-linear semisimple rigid monoidal category with a simple unit object $\textbf{1}$, finite-dimensional morphisms space, and finitely many isomorphism classes of simple objects. We refer the reader to \cite{drinfeld2010braided,etingof2005fusion} for the main notions about fusion categories. For the reader's convenience, we collect some definitions and basic results in this section.

\subsection{Frobenius-Perron dimension}\label{sec21}
Let $\C$ be a fusion category, and let $\Irr(\C)$ denote the set of isomorphism classes of simple objects of $\C$. The FP dimension of a simple object $X\in\C$ is the FP eigenvalue of the matrix of left multiplication by the class of $X$ in $\Irr(\C)$. The FP dimension of $\C$ is the number $$\FPdim(\C)=\sum_{X\in\Irr(\C)}\FPdim(X)^2.$$

A fusion category $\C$ is called integral if $\FPdim(X)$ is an integer for all simple object $X\in\C$, and it is called weakly integral if $\FPdim(\C)$ is an integer.

Let $\Irr_\alpha(\C)$ be the set of isomorphism classes of simple objects of FP dimension $\alpha$. Then the set $\Irr_1(\C)$ is the set of all invertible simple objects, and it generates the unique largest pointed fusion subcategory $\C_{pt}$ of $\C$. Recall that a fusion category is called pointed if every simple object has Frobenius-Perron dimension $1$. The set $\Irr_1(\C)$ also forms a group with multiplication given by tensor product. We denote this group by $G(\C)$. This group admits an action on the set $\Irr(\C)$ by left tensor product. For $X\in \Irr(\C)$, we use $G[X]$ to denote the stabilizer of $X$ under this action.

\begin{lem}\label{lem21}
Let $\C$ be a fusion category. Suppose that the order of $G(\C)$ is a power of a prime number $p$ and ${\rm gcd}(|\Irr_{\alpha}(\C)|, p)=1$ for some $\alpha>1$. Then there exists $X\in\Irr_\alpha(\C)$ such that $G[X]=G(\C)$.
\end{lem}

\begin{proof}
Obviously, the action of $G(\C)$ on the set $\Irr(\C)$ preserves FP dimension. Hence, this action can be restricted to the set $\Irr_\alpha(\C)$. The set $\Irr_\alpha(\C)$ is therefore a union of disjoint orbits under this restricted action, and every orbit has length $p^i$ for some $i\geqslant0$. Since ${\rm gcd}(|\Irr_{\alpha}(\C)|, p)=1$ there exists at least one orbit having length 1, which implies the lemma.
\end{proof}

\subsection{Group extensions}\label{sec22}
Let $G$ be a finite group and let $e\in G$ be the identity element. A fusion category $\C$ is said to have a $G$-grading if $\C$ has a direct sum of full abelian subcategories $\C=\oplus_{g\in G}\C_g$ such that $(\C_g)^\ast=\C_{g^{-1}}$ and $\C_g\otimes\C_h\subseteq\C_{gh}$ for all $g,h\in G$. The grading $\C=\oplus_{g\in G}\C_g$ is called faithful if $\C_g\neq 0$ for all $g\in G$. A fusion category $\C$ is said a $G$-extension of $\D$ if $\C$ admits a faithful grading $\C=\oplus_{g\in G}\C_g$ such that $\C_e\cong\D$.

By \cite[Proposition 8.20]{etingof2005fusion}, if $\C$ is a $G$-extension of $\D$ then, for all $g,h\in G$, we have
\begin{equation}\label{eq0}
\begin{split}
\FPdim(\C_g)=\FPdim(\C_h)\, \mbox{and}\, \FPdim(\C)=|G| \FPdim(\D).
\end{split}
\end{equation}

It is known that every fusion category $\C$ has a canonical faithful grading $\C=\oplus_{g\in \mathcal{U}(\C)}\C_g$ with trivial component $\C_e=\C_{ad}$, where $\mathcal{U}(\C)$ is the universal grading group of $\C$, and $\C_{ad}$ is the adjoint subcategory of $\C$ generated by simple objects in $X\otimes X^\ast$ for all $X\in \Irr(\C)$. This grading is called the universal grading of $\C$.

\subsection{Equivariantizations and de-equivariantizations}\label{sec23}
Let $\C$ be a fusion category with an action of a finite group $G$. We then can define a new fusion category $\C^G$ of $G$-equivariant objects in $\C$. An object of this category is a pair $(X,(u_g)_{g\in G})$, where $X$ is an object of $\C$, $u_g : g(X)\to X$ is an isomorphism for all $g\in G$, such that
$$u_{gh}\circ \alpha_{g,h} =u_g \circ g(u_h),$$
where $\alpha_{g,h}: g(h(X))\to gh(X)$ is the natural isomorphism associated to the action. Morphisms
and tensor product of equivariant objects are defined in an obvious way. This new category is called the $G$-equivariantization of $\C$.

In the other direction, let $\C$ be a fusion category and let
$\E = \Rep(G)\subseteq \mathcal{Z}(\C)$ be a Tannakian subcategory that embeds into $\C$ via the forgetful
functor $\mathcal{Z}(\C)\to \C$. Here $\mathcal{Z}(\C)$ denotes the Drinfeld center of $\C$. Let $A={\rm Fun}(G)$ be the algebra of functions on $G$. It is a commutative algebra in $\mathcal{Z}(\C)$. Let $\C_G$ denote the category of left $A$-modules in $\C$. It is a fusion category and called the de-equivariantization of $\C$ by $\E$. See \cite{drinfeld2010braided} for details on equivariantizations and de-equivariantizations.

Equivariantizations and de-equivariantizations are inverse to each other:
$$(\C_G)^G\cong\C\cong(\C^G)_G,$$
and their FP dimensions have the following relations:
\begin{equation}\label{eq1}
\begin{split}
\FPdim(\C)=|G|\FPdim(\C_G) \,\mbox{\,and}\, \FPdim(\C^G)=|G| \FPdim(\C).
\end{split}
\end{equation}

\subsection{Solvable fusion categories}\label{sec24}
A fusion category $\C$ is nilpotent if there exists a sequence of fusion categories $\vect=\C_0\subseteq\C_1\subseteq\cdots\subseteq\C_n=\C$, and a sequence of finite groups $G_1,\cdots,G_n$ such that $\C_i$ is a $G_i$-extension of $\C_{i-1}$ for all $i$. If the groups $G_1,\cdots,G_n$ are cyclic groups of prime order then $\C$ is called cyclically nilpotent.

 A fusion category $\C$ is called solvable if it is Morita equivalent to a cyclically nilpotent fusion category. Here the notion of Morita equivalence is a categorical analogue of the notion of Morita equivalence for rings. Precisely, two fusion categories $\C$ and $\D$ are Morita equivalent if $D$ is equivalent to the dual $\C^\ast_{\M}$ with respect to an indecomposable module category $\M$, where $\C^\ast_{\M}$ is the category of $\C$-module endofunctors of $\M$.

Solvable fusion categories have good properties: If $\C$ is a solvable fusion category then $\C$ can be obtained by recursive extensions or equivariantizations from the trivial fusion category $\vect$, where all finite groups involved are cyclic groups of prime order. It is shown in \cite[Proposition 4.5]{etingof2011weakly} that the class of solvable fusion categories is closed under taking extensions and equivariantizations by solvable groups.

\subsection{Group-theoretical fusion categories}\label{sec25}
A fusion category $\C$ is called group-theoretical if it is Morita equivalent to a pointed fusion category. A group-theoretical fusion category can be precisely reconstructed from finite group data as follows:

Let $H$ be a subgroup of a finite group $G$, $\omega \in Z^3(G, \mathbb{C}^{\times})$ a normalized 3-cocycle, and $\psi \in C^2(H, \mathbb{C}^{\times})$ a normalized 2-cochain such that $\mathrm{d}\psi = \omega |_H$. Let $\vect_G^\omega$ be the category of $G$-graded vector spaces with associativity given by 3-cocycle $\omega$. The twisted group algebra $\mathbb{C}_\psi[H]$ is an associative algebra in $\vect_G^\omega$. Therefore we may consider the category
\begin{equation*}
\C(G, \omega, H, \psi) := \{\mathbb{C}_\psi[H]-\text{bimodules in} \vect_G^\omega\}
\end{equation*}
with tensor product $\otimes_{\mathbb{C}_\psi[H]}$ and unit object $\mathbb{C}_\psi[H]$. It is a group-theoretical fusion category. In fact every group-theoretical fusion category can be obtained in this way  \cite[Section 8.8]{etingof2005fusion}. Note that a group-theoretical fusion category must be integral \cite[Corollary 8.43]{etingof2005fusion}.

\subsection{Braided fusion categories}\label{sec26}
A fusion category $\C$ is called braided if it admits a braiding $c$, where the braiding $c$ is a family of natural isomorphisms: $c_{X,Y}$:$X\otimes Y\rightarrow Y\otimes X$ satisfying the hexagon axioms for all $X,Y\in\C$ \cite{kassel1995quantum}.

A braided fusion category $\C$ is called symmetric if $c_{Y,X}c_{X,Y}=id_{X\otimes Y}$ for all objects $X,Y\in\C$. A symmetric fusion category $\C$ is said to be Tannakian if it is equivalent to $\Rep(G)$ for some finite group $G$ as symmetric categories.

Let $G$ be a finite group and let $u\in G$ be a central element of order 2. Then the category $\Rep(G)$ has a braiding $c^u_{X,Y}$ as follows: for all\, $x\in X, y\in Y$,
\begin{equation}\label{eq2}
\begin{split}
c^u_{X,Y}(x\otimes y)=(-1)^{mn}y\otimes x\,\, \mbox{if}\,\,ux=(-1)^mx,uy=(-1)^ny.
\end{split}\nonumber
\end{equation}

Let $\Rep(G,u)$ be the fusion category $\Rep(G)$ equipped with the new braiding $c^u_{X,Y}$. Deligne proved that any symmetric fusion category is equivalent to some $\Rep(G,u)$ \cite{deligne1990categories}. The following lemma is taken from \cite[Corollary 2.50]{drinfeld2010braided}.

\begin{lem}\label{lem22}
Let $\C$ be the symmetric fusion category $\Rep(G,u)$. Then one of the following holds:

(1) $\C$ is a Tannakian category;

(2) $\Rep(G/\langle u\rangle)$ is a Tannakian subcategory of $\C$ with dimension $\frac{1}{2}\FPdim(\C)$. In particular, if $\FPdim(\C)$ is odd then $\C$ is Tannakian.
\end{lem}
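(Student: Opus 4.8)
The plan is to use the central element $u$ to put a $\mathbb{Z}/2$-grading on $\Irr(\C)$ and then identify the ``even'' part as the Tannakian subcategory we want. Since $u$ is central and $u^2=e$, Schur's lemma shows that $u$ acts on each simple object $X$ of $\Rep(G)$ by a scalar $\epsilon(X)\in\{\pm1\}$; call $X$ even if $\epsilon(X)=1$ and odd otherwise. Because $u$ is central, $\epsilon$ is multiplicative under tensor product and trivial on the unit and on duals, so the even simple objects are the simple objects of a full fusion subcategory $\C_0\subseteq\C$ (and $\C=\C_0\oplus\C_1$ is $\mathbb{Z}/2$-graded). As an abelian category $\C_0$ consists exactly of the representations of $G$ that factor through $G/\langle u\rangle$, i.e.\ $\C_0$ is $\Rep(G/\langle u\rangle)$. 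The key observation is that the braiding $c^u$ restricts to the trivial flip on $\C_0$: in the defining formula for $c^u_{X,Y}$ one has $m=n=0$ whenever $X,Y\in\C_0$. Hence $\C_0$ is $\Rep(G/\langle u\rangle)$ with its standard symmetric structure, so $\C_0$ is Tannakian.

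Next I would split into two cases according to whether $u=e$. If $u=e$, then $\langle u\rangle$ is trivial, every simple object is even, $\C=\C_0=\Rep(G)$, and conclusion (1) holds. If $u\neq e$, then $\langle u\rangle$ has order $2$, so $\FPdim(\C_0)=\FPdim(\Rep(G/\langle u\rangle))=|G|/2$, while $\FPdim(\C)=\FPdim(\Rep(G))=|G|$; thus $\C_0$ is a Tannakian subcategory of dimension $\tfrac12\FPdim(\C)$ and conclusion (2) holds. (The two cases are in fact mutually exclusive, though the statement only claims an inclusive disjunction: if $u\neq e$ then some simple object $X$ is odd, since otherwise $u$ would act trivially on a faithful representation; for such $X$ the formula gives $c^u_{X,X}=-\mathrm{id}_{X\otimes X}$ composed with the flip, so the $(+1)$-eigenspace of $c^u_{X,X}$ on $X\otimes X$ is the exterior square of $X$, which has dimension $\binom{d}{2}<\binom{d+1}{2}$ when $d=\dim X\geq1$ — impossible in any Tannakian category, where that eigenspace is the symmetric square.)

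For the ``in particular'' clause: if $\FPdim(\C)=|G|$ is odd then $G$ has no element of order $2$, so the element $u$, whose order divides $2$, must be $e$, and we are in case (1), i.e.\ $\C$ is Tannakian. The only real work is the identification of $\C_0$ with $\Rep(G/\langle u\rangle)$ as a symmetric fusion category — in particular checking that $c^u$ is the trivial braiding there — together with a dimension count; beyond Schur's lemma and the explicit formula for $c^u$ I do not expect any serious obstacle.
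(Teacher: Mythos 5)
Your argument is correct, and it is worth noting that the paper does not actually prove this lemma at all: it is quoted verbatim from \cite[Corollary 2.50]{drinfeld2010braided} (Deligne's classification $\C\cong\Rep(G,u)$ is invoked just before the statement, and the dichotomy is then taken as known). So your proposal supplies a self-contained elementary proof where the paper has only a citation. Your route --- Schur's lemma gives the $\pm1$-eigenvalue $\epsilon(X)$ of the central involution $u$ on each simple, the even part $\C_0$ is a fusion subcategory identified with $\Rep(G/\langle u\rangle)$, and the explicit formula for $c^u$ shows the braiding restricts to the ordinary flip there --- is essentially the standard proof in \cite{drinfeld2010braided}, specialized to the concrete model $\Rep(G,u)$ that the paper has already fixed. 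Two small points. First, your parenthetical on mutual exclusivity (the $\Lambda^2$ versus $\mathrm{Sym}^2$ dimension count showing $\Rep(G,u)$ with $u\neq e$ is never Tannakian) is not needed for the inclusive disjunction as stated, but it is the honest content behind the dichotomy and is argued correctly. Second, for the ``in particular'' clause you pass through ``$|G|$ odd $\Rightarrow$ $u=e$''; one could instead note directly that case (2) forces $\FPdim(\C)$ to be even, which is marginally cleaner but equivalent. No gaps.
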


Note that the lemma above shows that if $\FPdim(\C)$ is bigger than 2, then $\C$ always has a non-trivial Tannakian subcategory $\Rep(G/\langle u\rangle)$.

\medbreak
Let $\D\subset\C$ be a fusion subcategory. Then the M\"{u}ger centralizer $\D'$ of $\D$ in $\C$ is the fusion subcategory $$\D'=\{Y\in\C|c_{Y,X}c_{X,Y}=id_{X\otimes Y}\, \mbox{for all}\, X\in\D\}.$$
The M\"{u}ger center $\mathcal{Z}_2(\C)$ of $\C$ is the M\"{u}ger centralizer $\C'$ of $\C$. The fusion category $\C$ is called non-degenerate if its M\"{u}ger center $\mathcal{Z}_2(\C)$ is trivial.

A braided fusion category is called premodular if it admits a spherical structure. A modular category is a non-degenerate premodular category. Combining Proposition 8.23 with Proposition 8.24 of \cite{etingof2005fusion}, we know that a weakly integral braided fusion category is modular if and only if it is non-degenerate. Thus, an ASF fusion category is modular if and only if it is non-degenerate.

Suppose that $\C$ is a modular category and $\D \subseteq \C$ is a fusion subcategory. The following equalities will be frequently used in this paper:
\begin{align}
(\C_{pt})'& = \C_{ad}\label{cptad};\\
\FPdim(\D) \FPdim(\D') &= \FPdim(\C)\label{fpdim}.
\end{align}
The first equality comes from \cite[Corollary 6.8]{gelaki2008nilpotent}, and the second one comes from \cite[Theorem 3.2]{muger2003structure}.

Let $\E=\Rep(G)$ be a Tannakian category, and let $\C$ be a braided fusion category containing $\Rep(G)$. Recall from \cite[Section 4.4.3] {drinfeld2010braided} that the de-equivariantization $\C_G$ of $\C$ by $\E$ is a braided $G$-crossed fusion category. Let $\C_G=\oplus_{g\in G}(\C_G)_g$ be the corresponding grading. The trivial component $(\C_G)_e$ of this grading is a braided fusion category. By \cite[Proposition 4.56]{drinfeld2010braided}, the braided fusion category $\C$ is non-degenerate if and only if the trivial component $(\C_G)_e$ is non-degenerate and the corresponding grading of $\C_G$ is faithful. This description implies the following lemma.

\begin{lem}\label{newadded}
If a modular  category $\C$ contains a Tannakian subcategory $\Rep(G)$, then the square of $|G|$ divides $\FPdim(\C)$. In particular, if $\C$ is an ASF modular category with FP dimension $p^nm$, then the FP dimension of any Tannakian subcategory of $\C$ is a power of $p$.
\end{lem}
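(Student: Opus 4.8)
The plan is to de-equivariantize $\C$ by the Tannakian subcategory $\E=\Rep(G)$ and to compare Frobenius-Perron dimensions along the way. Since $\C$ is braided and contains $\Rep(G)$, the de-equivariantization $\C_G$ is a braided $G$-crossed fusion category carrying a $G$-grading $\C_G=\oplus_{g\in G}(\C_G)_g$, as recalled just before the statement. Because $\C$ is modular, hence non-degenerate, \cite[Proposition 4.56]{drinfeld2010braided} applies and tells us that this grading is faithful (and, moreover, that $(\C_G)_e$ is itself non-degenerate). I would first use faithfulness together with \eqref{eq0} to obtain $\FPdim(\C_G)=|G|\,\FPdim\big((\C_G)_e\big)$, and then combine this with the first identity of \eqref{eq1}, namely $\FPdim(\C)=|G|\,\FPdim(\C_G)$, to conclude
\[
\FPdim(\C)=|G|^2\,\FPdim\big((\C_G)_e\big).
\]

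Next I would note that $\FPdim\big((\C_G)_e\big)$ is the Frobenius-Perron dimension of a fusion category, hence a positive algebraic integer, so that $|G|^2$ divides $\FPdim(\C)$; when $\FPdim(\C)$ is an ordinary integer --- the only case actually used afterwards --- the quotient $\FPdim(\C)/|G|^2$ is a rational algebraic integer and therefore an integer, so the divisibility is the usual one. For the ``in particular'' assertion, assume $\C$ is ASF with $\FPdim(\C)=p^nm$ and let $\E=\Rep(G)$ be any Tannakian subcategory; the first part gives $|G|^2\mid p^nm$. Writing $|G|=p^{a}d$ with $\gcd(d,p)=1$, this forces $d^2\mid m$, and since $m$ is square-free we get $d=1$, i.e.\ $|G|=p^{a}$. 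As $\FPdim(\E)=\FPdim(\Rep(G))=|G|$, the Frobenius-Perron dimension of $\E$ is a power of $p$.

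The only point needing a little care is the invocation of \cite[Proposition 4.56]{drinfeld2010braided}: one must check that the embedding $\Rep(G)\hookrightarrow\C$ is of the form to which the de-equivariantization formalism and that proposition apply, that is, that the symmetric embedding $\Rep(G)\hookrightarrow\mathcal{Z}(\C)$ induced by the braiding of $\C$ composes with the forgetful functor $\mathcal{Z}(\C)\to\C$ to recover the given inclusion. This holds automatically for any Tannakian subcategory of a braided fusion category and is exactly the setting recalled above, so no genuine obstacle arises; everything else is bookkeeping with \eqref{eq0} and \eqref{eq1}.
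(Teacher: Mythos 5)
Your proposal is correct and follows exactly the route the paper intends: the paper derives this lemma directly from the discussion preceding it (faithfulness of the $G$-grading on $\C_G$ and non-degeneracy of $(\C_G)_e$ via \cite[Proposition 4.56]{drinfeld2010braided}), combined with the dimension formulas \eqref{eq0} and \eqref{eq1} to get $\FPdim(\C)=|G|^2\FPdim\big((\C_G)_e\big)$. The square-free argument for the ``in particular'' part is also the evident one, so there is nothing to add.
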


Let $\SuperV$ be the category of super vector spaces. We recall a helpful lemma of M\"uger's regarding braided fusion categories containing $\SuperV$.

\begin{lem}\label{mugerlem}\cite[Lemma 5.4]{muger2000galois}
Let $\C$ be a braided fusion category such that $\SuperV \subseteq \mathcal{Z}_2(\C)$ and let $g \in \C$ be the invertible object generating $\SuperV$. Then $g \otimes X \ncong X$ for all $X \in \Irr(\C)$.
\end{lem}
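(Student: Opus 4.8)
The claim is: if $\C$ is braided with $\SuperV\subseteq\Z_2(\C)$ and $g$ is the invertible object generating $\SuperV$, then $g\otimes X\ncong X$ for all $X\in\Irr(\C)$. The plan is to argue by contradiction: suppose $g\otimes X\cong X$ for some simple $X$. Since $g$ has order $2$ and $g\otimes X\cong X$, the simple object $X$ carries a nontrivial action of $\langle g\rangle\cong\Z/2$ by tensoring, and in particular $X\otimes X^\ast$ contains $g$ as a subobject (indeed $\Hom(X,g\otimes X)=\Hom(g^\ast\otimes X,X)\neq 0$ forces $g$ to appear in $X\otimes X^\ast$, hence $g\in\C_{ad}$). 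The key point to exploit is that $g$ lies in the Müger center, so $g$ braids trivially (in the symmetric sense) with everything, and moreover as an object of $\SuperV$ it has twist $\theta_g=-1$ (the nontrivial object of $\SuperV$ is the generator with $\theta=-1$; this is what distinguishes $\SuperV$ from $\Rep(\Z/2)$ inside the Müger center).

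The main step is a balancing/twist computation. Because $g\otimes X\cong X$ and $g\in\Z_2(\C)$, one can compare the twist $\theta_X$ computed before and after tensoring with $g$: functoriality of the twist together with the fact that $c_{g,X}c_{X,g}=\id$ (Müger centrality of $g$) and the ribbon equation $\theta_{g\otimes X}=(\theta_g\otimes\theta_X)\circ c_{X,g}c_{g,X}=\theta_g\theta_X\cdot\id_{g\otimes X}$ gives, under the isomorphism $g\otimes X\cong X$, the relation $\theta_X=\theta_g\theta_X=-\theta_X$. Since $\theta_X$ is an invertible scalar on the simple object $X$, this yields $\theta_X=-\theta_X$, i.e. $2\theta_X=0$, which is impossible over a field of characteristic zero. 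This contradiction proves the lemma.

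I expect the main obstacle to be making the identification $\theta_{g\otimes X}\cong\theta_X$ under the chosen isomorphism $\phi\colon g\otimes X\xrightarrow{\sim}X$ fully rigorous: one must check that naturality of $\theta$ forces $\phi\circ\theta_{g\otimes X}=\theta_X\circ\phi$, and then feed in the ribbon formula for $\theta_{g\otimes X}$ together with $\theta_g=-1$ and the symmetric braiding of $g$. A clean alternative, which sidesteps some of this bookkeeping, is to pass to the full subcategory generated by $g$ and $X$ (or to use that $\langle g\rangle$ acts freely or trivially on orbits of $\Irr(\C)$ and analyze the de-equivariantization or the graded pieces), but the twist computation above is the shortest route. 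One should also note explicitly why the object generating $\SuperV$ has $\theta_g=-1$ rather than $+1$: this is precisely the definition of $\SuperV$ as a symmetric category, as opposed to $\Rep(\Z/2)=\vect_{\Z/2}$, and it is the only place where the hypothesis ``$\SuperV$'' (and not merely a $\Z/2$ inside the Müger center) is used.
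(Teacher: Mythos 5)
The paper does not actually prove this statement; it is quoted verbatim from M\"uger's paper, so there is no internal proof to compare against. Your twist computation is, in substance, exactly M\"uger's original argument: from $g\in\mathcal{Z}_2(\C)$ one gets $c_{X,g}c_{g,X}=\id_{g\otimes X}$, the ribbon identity then gives $\theta_{g\otimes X}=\theta_g\theta_X=-\theta_X$ on the simple object $g\otimes X$, while naturality of $\theta$ under an isomorphism $\phi\colon g\otimes X\to X$ forces $\theta_{g\otimes X}=\theta_X$; since $\theta_X\neq 0$ in characteristic zero this is a contradiction. So the skeleton is right, and the bookkeeping you worry about (checking $\phi\circ\theta_{g\otimes X}=\theta_X\circ\phi$) is just naturality of the twist and poses no problem. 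The aside about $g$ appearing in $X\otimes X^{\ast}$ is correct but not needed.

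The one point that genuinely requires attention is your use of a twist at all: the lemma is stated for an arbitrary braided fusion category, and a braided fusion category does not come equipped with a ribbon structure --- that is equivalent to choosing a spherical structure, whose existence in general is open. M\"uger works in a setting where twists are present, and in this paper the lemma is only ever applied to integral (hence pseudo-unitary) braided categories, where the canonical spherical structure makes the category premodular and gives $\dim(g)=\FPdim(g)=1$, whence $\theta_g=c_{g,g}=-\id$ for the generator of $\SuperV$. You should either add this reduction explicitly, or replace the appeal to ``$\theta_g=-1$ by definition of $\SuperV$'' with the braided-categorical characterization $c_{g,g}=-\id_{g\otimes g}$, which is what actually distinguishes $\SuperV$ from $\Rep(\mathbb{Z}/2)$ without reference to any spherical structure; the twist value $-1$ is then a consequence of that choice of structure, not a definition. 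With that caveat addressed, your proof is complete and is the standard one.
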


\section{Group-theoretical properties of integral ASF modular categories}\label{sec3}
\setcounter{equation}{0}
In this section, we aim to study the group-theoretical property of an integral ASF modular category, so we may assume all fusion categories considered are integral.

Throughout this section, $\C$ is an integral modular category of FP dimension $p^nm$, where $p$ is a prime number, $m$ is a square-free natural number and ${\rm gcd}(p,m)=1$. We may assume that $m>1$ since \cite[Corollary 6.8]{drinfeld2007group} shows that an integral fusion category of prime power dimension is always group-theoretical.

Note that an integral modular category $\C$ of square-free FP dimension must be pointed.  In fact, \cite[Lemma 1.2]{etingof1998some} shows that the square of $\FPdim(X)$ divides $\FPdim(\C)$ for every simple object $X$ of $\C$. Since $\FPdim(\C)$ is square-free, $\FPdim(X)$ must be $1$. Hence $\C$ is pointed.  We therefore will always assume in the following context that $n\geq2$.

Let $n=2t+1$ if $n$ is odd, or $n=2t$ if $n$ is even. Then \cite[Lemma 1.2]{etingof1998some} shows that the possible FP dimensions of simple objects of $\C$ are $1, p, \cdots, p^t$. Let $a_0, a_1, \cdots, a_t$ be the number of non-isomorphic simple objects of $\C$ of FP dimension $1, p, \cdots, p^t$, respectively. Then we get an equation:
\begin{equation}\label{eq3}
\begin{split}
a_0+a_1p^2+\cdots+a_tp^{2t}=p^nm.
\end{split}
\end{equation}

 Since we have assumed that $n\geq2$, we get that $p^2$ divides $a_0=\FPdim(\C_{pt})$  by equation (\ref{eq3}). This observation implies the following result.

\begin{lem}\label{lem30}
The FP dimension of $\C_{pt}$ is divisible by  $p^2$.
\end{lem}
Since $\C$ is modular, the universal group $\mathcal{U}(\C)$ is isomorphic to the group $G(\C)$ consisting of the isomorphism classes of invertible simple objects \cite[Theorem 6.2]{gelaki2008nilpotent}. Hence $|\mathcal{U}(\C)|=\FPdim(\C_{pt})$ is divisible by $p^2$.

\begin{lem}\label{lem31}
Suppose that $\C$ is not pointed. Then $\FPdim(\C_{pt})$ is not divisible by $p^{n-1}$.
\end{lem}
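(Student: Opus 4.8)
The plan is to argue by contradiction: assume $\C$ is not pointed but $p^{n-1}\mid \FPdim(\C_{pt})$, and derive a contradiction by counting. Since $\C$ is modular, equation (\ref{fpdim}) applied to $\D=\C_{pt}$ gives $\FPdim(\C_{pt})\FPdim(\C_{ad})=\FPdim(\C)=p^nm$, where the first equality uses $(\C_{pt})'=\C_{ad}$ from (\ref{cptad}). Writing $\FPdim(\C_{pt})=p^k\ell$ with $\ell\mid m$, the hypothesis says $k\geq n-1$; combined with Lemma \ref{lem30} we already know $k\geq 2$, but the real point is that $\FPdim(\C_{ad})=p^{n-k}m/\ell$ is then very small: either $p^0\cdot(m/\ell)$ or $p^1\cdot(m/\ell)$. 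So $\C_{ad}$ has FP dimension with at most one factor of $p$.

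Next I would use the universal grading $\C=\oplus_{g\in\mathcal U(\C)}\C_g$ with trivial component $\C_{ad}$. By (\ref{eq0}) every nontrivial component has the same FP dimension as $\C_{ad}$, and by the remark after Lemma \ref{lem30}, $\mathcal U(\C)\cong G(\C)$ so $|\mathcal U(\C)|=\FPdim(\C_{pt})=p^k\ell$. Each component $\C_g$ contains at most $\FPdim(\C_g)=\FPdim(\C_{ad})$-worth of simple objects; more precisely, if $\C_{ad}$ has FP dimension $p^{\epsilon}(m/\ell)$ with $\epsilon\in\{0,1\}$, then since $\FPdim$ of any simple object is a power of $p$, and the sum of squares of FP dimensions of the simples in $\C_g$ equals $\FPdim(\C_g)=p^{\epsilon}(m/\ell)$, which is \emph{square-free up to the single factor $p^{\epsilon}$}, in fact $\epsilon\le 1$ forces every simple object in every graded component to be invertible. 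Hence $\C$ itself is pointed, contradicting the assumption. The cleanest way to see the last implication: a simple $X$ with $\FPdim(X)=p^j$, $j\ge 1$, would contribute $p^{2j}\ge p^2$ to $\FPdim(\C_g)$, but $\FPdim(\C_g)=\FPdim(\C_{ad})$ is not divisible by $p^2$, so $j=0$.

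The main obstacle is making rigorous the claim that $\FPdim(\C_{ad})$ is not divisible by $p^2$: this is exactly $n-k\le 1$, i.e. $k\ge n-1$, which is precisely the standing hypothesis, so the argument is in fact short once the bookkeeping with (\ref{fpdim}), (\ref{cptad}), (\ref{eq0}) and $\mathcal U(\C)\cong G(\C)$ is set up. One should double-check the edge case $k=n$ (so $\FPdim(\C_{ad})=m/\ell$ coprime to $p$): the same contribution argument still shows no simple in $\C_{ad}$ — hence, by the grading, in $\C$ — can have FP dimension a positive power of $p$, and since the only FP dimensions available are powers of $p$, all simples are invertible. So in every case $\C$ is pointed, the desired contradiction; therefore if $\C$ is not pointed then $p^{n-1}\nmid\FPdim(\C_{pt})$.
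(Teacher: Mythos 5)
There is a genuine gap at the heart of your argument. The step ``a simple $X$ with $\FPdim(X)=p^j$, $j\ge 1$, would contribute $p^{2j}\ge p^2$ to $\FPdim(\C_g)$, but $\FPdim(\C_g)$ is not divisible by $p^2$, so $j=0$'' is a non sequitur: one summand of $\sum_{X}\FPdim(X)^2$ being divisible by $p^2$ does not make the total divisible by $p^2$. For instance, a graded component of FP dimension $6=2\cdot 3$ (with $p=2$) could a priori have type $(1,2;2,1)$, since $2+1\cdot 2^2=6$; nothing in your argument rules this out. So you have not shown that the components are pointed, and the contradiction you claim does not materialize. Note also that you cannot repair this by invoking the theorem that FP dimensions of simple objects divide the dimension of the category: the nontrivial components $\C_g$ of the universal grading are not fusion subcategories, so that theorem applies only to $\C_{ad}=\C_e$.

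The paper's proof splits into the two cases you identified but handles each with a correct mechanism. When $p^n$ divides $\FPdim(\C_{pt})$, so that $\FPdim(\C_{ad})=m''$ is coprime to $p$, it uses that $\C_{ad}$ is \emph{braided}, so the FP dimension of each of its simple objects divides $\FPdim(\C_{ad})$; since all simple FP dimensions in $\C$ are powers of $p$, this forces $\C_{ad}$ to be pointed, whence $\C_{ad}\subseteq\C_{pt}$ and $m''>1$ must divide $p^nm'$ --- a coprimality contradiction. When $p^{n-1}$ exactly divides $\FPdim(\C_{pt})$, so each component has FP dimension $pm''$, it reduces the equation $a_g^0+\sum_{k\ge 1}a_g^kp^{2k}=pm''$ modulo $p^2$ to conclude that $a_g^0$ is a \emph{nonzero multiple of $p$} (not that the higher $a_g^k$ vanish), and then counts: $p^{n-1}m'$ components each containing at least $p$ invertibles yields at least $p^nm'$ invertible objects, exceeding $\FPdim(\C_{pt})=p^{n-1}m'$. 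Your proposal would need to be rebuilt along one of these lines; as written, neither case is actually closed.
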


\begin{proof}
Suppose first that $\FPdim(\C_{pt})=p^nm'$ for some $m'\in \mathbb{N}$. By equation (\ref{eq0}), we have
$$|\mathcal{U}(\C)|\FPdim(\C_{ad})=\FPdim(\C_{pt})\FPdim(\C_{ad})=\FPdim(\C).$$
This implies that $\FPdim(\C_{ad})=m''>1$ is square-free and is not divisible by $p$, where $m'm''=m$.

Since $\C_{ad}$ is braided, the FP dimension of every simple object of $\C_{ad}$ divides $\FPdim(\C_{ad})$ \cite[Theorem 2.11]{etingof2011weakly}. Hence $\C_{ad}$ does not contain simple objects of FP dimension $p^i$, for any $1\leqslant i\leqslant t$. That is, $\C_{ad}$ is pointed. The subcategory $\C_{pt}$ is the unique largest pointed fusion subcategory of $\C$. Hence $\C_{ad}$ is a fusion subcategory of $\C_{pt}$. It follows that $\FPdim(\C_{ad})$ divides $\FPdim(\C_{pt})$ \cite[Proposition 8.15]{etingof2005fusion}. This is a contradiction since $1\neq \FPdim(\C_{ad})$ is relatively prime to $\FPdim(\C_{pt})$.

Suppose then that $\FPdim(\C_{pt})=p^{n-1}m'$ for some $m'\in \mathbb{N}$ with ${\rm gcd}(p,m')=1$. In this case $\FPdim(\C_g)=pm''$ for every component $\C_g$ of the universal grading of $\C$, for some $m''\in \mathbb{N}$. Let $a_g^0, a_g^1, \cdots, a_g^t$ be the number of non-isomorphic simple objects of $\C_g$ of dimension $1, p, \cdots, p^t$. Then we have an equation:
\begin{equation}\label{eq4}
\begin{split}
a_g^0+a_g^1p^2+\cdots+a_g^tp^{2t}=pm''.
\end{split}
\end{equation}
This equation implies that $a_g^0\neq 0$ and $p$ divides $a_g^0$. This further means that every component $\C_g$ contains at least $p$ non-isomorphic invertible simple objects. There are $p^{n-1}m'$ components in the universal grading, since $|\mathcal{U}(\C)|=\FPdim(\C_{pt})$. Therefore, there are at least $p^nm'$ non-isomorphic invertible simple objects in $\C$. This contradicts the fact $\FPdim(\C_{pt})=p^{n-1}m'$.
\end{proof}

\begin{cor}\label{cor32}
If $n\leqslant3$ then $\C$ is pointed.
\end{cor}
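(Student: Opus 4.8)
The plan is simply to combine the two lemmas immediately preceding the statement and derive a contradiction from the assumption that $\C$ is not pointed. Recall the standing hypotheses of this section: $\C$ is an integral modular category of FP dimension $p^nm$ with $m>1$ square-free, $\gcd(p,m)=1$, and (as explained in the text) $n\geq 2$. So for $n\leq 3$ we are in one of the two cases $n=2$ or $n=3$.

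First I would observe that in either case $n-1\leq 2$, hence $p^{n-1}$ divides $p^2$. Next, by Lemma \ref{lem30} (equivalently, by equation \eqref{eq3} together with the assumption $n\geq 2$), the integer $\FPdim(\C_{pt})=a_0$ is divisible by $p^2$, and therefore by $p^{n-1}$. On the other hand, Lemma \ref{lem31} asserts that if $\C$ were not pointed, then $\FPdim(\C_{pt})$ would \emph{not} be divisible by $p^{n-1}$. These two facts contradict each other, so $\C$ must be pointed.

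There is essentially no obstacle here: the corollary is a formal consequence of Lemmas \ref{lem30} and \ref{lem31}, and the only thing to verify is the trivial arithmetic observation that $p^{n-1}\mid p^2$ when $n\leq 3$, together with the bookkeeping that the hypotheses $m>1$ and $n\geq 2$ (hence $n\in\{2,3\}$) are in force so that both lemmas apply. (The complementary case $n\leq 1$ is not relevant, since then $\FPdim(\C)$ is square-free and $\C$ is already known to be pointed, as noted earlier in the section.)
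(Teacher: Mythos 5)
Your proposal is correct and follows exactly the paper's own argument: dispose of $n\leq 1$ via the square-free observation, then for $n\in\{2,3\}$ combine Lemma \ref{lem30} ($p^2$ divides $\FPdim(\C_{pt})$) with Lemma \ref{lem31} ($p^{n-1}\nmid\FPdim(\C_{pt})$ if $\C$ is not pointed) to get a contradiction. The only addition is your explicit remark that $p^{n-1}\mid p^2$ when $n\leq 3$, which the paper leaves implicit.
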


\begin{proof}
If $n=0$ or $1$ then $\FPdim(\C)$ is square-free. This case has been stated at the beginning of this section.

When $n=2$ or $3$. Suppose that $\C$ is not pointed. By Lemma \ref{lem30}, $\FPdim(\C_{pt})$ is divisible by $p^2$. But Lemma \ref{lem31} shows that it is impossible. Hence $\C$ is pointed.
\end{proof}

In view of Corollary \ref{cor32}, we assume that $n\geqslant4$ in the following context.

\begin{lem}\label{lem33}
The largest pointed fusion category $(\C_{ad})_{pt}$ of $\C_{ad}$ is a symmetric subcategory with FP dimension $p^j$ for some $2\leqslant j\leqslant n-2$. In particular, $\C$ has a non-trivial pointed Tannakian subcategory of dimension $p$.
\end{lem}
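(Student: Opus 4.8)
The plan is to identify $(\C_{ad})_{pt}$ with the intersection $\C_{ad}\cap\C_{pt}$, to deduce its symmetry from the relation $(\C_{pt})'=\C_{ad}$, and then to pin down its Frobenius--Perron dimension by a counting argument modulo $p^2$. Throughout I will assume that $\C$ is not pointed, since a pointed $\C$ is already group-theoretical (and has $\C_{ad}=\vect$). The data I would record first: since $\FPdim(\C_{pt})$ divides $\FPdim(\C)=p^nm$ we may write $\FPdim(\C_{pt})=p^am'$ with $m'$ a divisor of the square-free number $m$, and then Lemma~\ref{lem30} and Lemma~\ref{lem31} give $2\le a\le n-2$; from \eqref{cptad} and \eqref{fpdim} it follows that $\FPdim(\C_{ad})=p^{b}m''$ with $b=n-a$ (so $2\le b\le n-2$) and $m'm''=m$.

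For the structural part: $(\C_{ad})_{pt}$ is pointed, hence a subcategory of $\C_{pt}$; since it is also a subcategory of $\C_{ad}$, it equals $\C_{ad}\cap\C_{pt}$ (the reverse inclusion is immediate because $\C_{ad}\cap\C_{pt}$ is a pointed subcategory of $\C_{ad}$). From $(\C_{pt})'=\C_{ad}$ the subcategory $\C_{pt}$ centralizes $\C_{ad}$, so in particular $(\C_{ad})_{pt}\subseteq\C_{pt}$ centralizes $\C_{ad}$, which contains it; hence $(\C_{ad})_{pt}$ is a symmetric subcategory.

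To compute $\FPdim((\C_{ad})_{pt})$, first use \cite[Proposition~8.15]{etingof2005fusion}: this integer divides both $\FPdim(\C_{pt})=p^am'$ and $\FPdim(\C_{ad})=p^bm''$, and since $m$ is square-free we have ${\rm gcd}(m',m'')=1$, so it divides $p^{\min(a,b)}$; write $\FPdim((\C_{ad})_{pt})=p^{j}$, whence $j\le n-2$. To see $j\ge2$ I would count inside $\C_{ad}$: every simple object of $\C$, hence of $\C_{ad}$, has Frobenius--Perron dimension a power of $p$ (\cite[Lemma~1.2]{etingof1998some}), so in $\FPdim(\C_{ad})=\sum_{X\in\Irr(\C_{ad})}\FPdim(X)^2$ every summand coming from a non-invertible $X$ is divisible by $p^2$; therefore $\FPdim(\C_{ad})\equiv|G(\C_{ad})|=\FPdim((\C_{ad})_{pt})=p^{j}\pmod{p^2}$, and since $b\ge2$ the left-hand side is $\equiv0$, forcing $j\ge2$. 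Finally, for the last assertion I would apply Lemma~\ref{lem22} to the symmetric category $(\C_{ad})_{pt}$ of dimension $p^j$, $j\ge2$: it is Tannakian when $p$ is odd, and when $p=2$ it contains a Tannakian subcategory of dimension $2^{j-1}\ge2$. In either case $\C$ contains a Tannakian subcategory of the form $\Rep(K)$ with $K$ a nontrivial finite abelian $p$-group, and choosing a surjection $K\twoheadrightarrow\mathbb{Z}/p$ produces $\Rep(\mathbb{Z}/p)\subseteq\Rep(K)\subseteq\C$, a Tannakian subcategory of dimension $p$.

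The step I expect to be the real obstacle is the lower bound $j\ge2$: the symmetry and the fact that $\FPdim((\C_{ad})_{pt})$ is a power of $p$ are formal, but for the congruence modulo $p^2$ to have content one needs $p^2\mid\FPdim(\C_{ad})$, and this is exactly where the assumption that $\C$ is not pointed is used --- it is Lemma~\ref{lem31} that forces $a\le n-2$, hence $b\ge2$. A secondary point of care is the case $p=2$ (where $m$ is odd): there $(\C_{ad})_{pt}$ may fail to be Tannakian, so one first passes to an index-two Tannakian subcategory before extracting the $\mathbb{Z}/p$-piece.
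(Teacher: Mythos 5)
Your proposal is correct and follows essentially the same route as the paper's proof: the same counting of $\FPdim(\C_{ad})$ modulo $p^2$ (using Lemmas \ref{lem30} and \ref{lem31} to get $2\le n-i\le n-2$), the same centralizer argument via \eqref{cptad} to get symmetry, and the same appeal to Lemma \ref{lem22} plus abelianness of the group to extract a Tannakian subcategory of dimension $p$. The only cosmetic difference is that you make explicit the identification $(\C_{ad})_{pt}=\C_{ad}\cap\C_{pt}$ and the coprimality $\gcd(m',m'')=1$, which the paper leaves implicit.
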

\begin{proof}
By Lemma \ref{lem30} and Lemma \ref{lem31}, we may write $\FPdim(\C_{pt})=p^im'$ for some $2\leq i\leq n-2$ and ${\rm gcd}(p,m')=1$. So we can write $\FPdim(\C_{ad})=p^{n-i}m''$, where $m'm''=m$.

Let $\D=(\C_{ad})_{pt}$. Then $\D$ is a fusion subcategory of $\C_{ad}$, as well as of $\C_{pt}$. Hence, dimension of $\D$ divides both $\FPdim(\C_{pt})$ and $\FPdim(\C_{ad})$. This implies that $\FPdim(\D)$ is a power of $p$. Let $a_e^0, a_e^1, \cdots, a_e^t$ be the number of non-isomorphic simple objects of $\C_e=\C_{ad}$ of dimension $1, p, \cdots, p^t$. Then we have an equation:
\begin{equation}\label{eq44}
\begin{split}
a_e^0+a_e^1p^2+\cdots+a_e^tp^{2t}=p^{n-i}m''.
\end{split}
\end{equation}
The fact $n-i\geq 2$ and the equation above imply that $\FPdim(\D)=a_e^0$ is divisible by $p^2$. So we may write $\FPdim(\D)=p^j$ for some $2\leq j\leq n-2$.

From $\D\subseteq\C_{ad}$ we have $(\C_{ad})'\subseteq\D'$. On the other hand $(\C_{ad})'=\C_{pt}\supseteq\D$ by \cite[Corollary 6.8]{gelaki2008nilpotent}. Hence, $\D\subseteq \D'$, and hence $\D$ is a symmetric fusion subcategory of $\C$.

If $p>2$ then $\FPdim(\D)$ is odd. In this case $\D\cong\Rep(G)$ is a Tannakian subcategory of $\C$. If $p=2$ then there exists a group $G$ of order $2^j$ and a central element $u\in G$ of order 2 such that $\D\cong \Rep(G,u)$ as symmetric fusion categories. It follows that $\Rep(G/\langle u\rangle)$ is a Tannakian subcategory of $\C$ with $\FPdim(\Rep(G/\langle u\rangle))=2^{j-1}$. In both cases, $G$ is abelian (since $\D$ is pointed), hence we get that $\D$, and hence $\C$ has a Tannakian subcategory of FP dimension $p$.
\end{proof}

\begin{thm}\label{thm35}
The modular category $\C$ is either group-theoretical or contains a Tannakian subcategory of FP dimension $p^i$, for some $i\geq2$.
\end{thm}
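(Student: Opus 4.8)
The plan is to start from Lemma \ref{lem33}, which already gives us a Tannakian subcategory $\E = \Rep(G)$ of FP dimension $p$ inside $\C$. The idea is to run an induction on $n$ by passing to the de-equivariantization. First I would set $\E = \Rep(\mathbb{Z}/p)$ and form $\C_{\mathbb{Z}/p}$, the de-equivariantization of $\C$ by $\E$. Since $\E$ is Tannakian and lies in the Müger center restricted appropriately, standard results (Drinfeld--Gelaki--Nikshych--Ostrik) give that $\C_{\mathbb{Z}/p}$ is a braided $\mathbb{Z}/p$-crossed fusion category whose trivial component $(\C_{\mathbb{Z}/p})_e$ is a modular category of FP dimension $\FPdim(\C)/p^2 = p^{n-2}m$. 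Here I use Lemma \ref{newadded} to know $p^2 \mid \FPdim(\C)$ and equation (\ref{eq1}).

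The key dichotomy comes from asking how $\E$ sits relative to the Müger center and the pointed part. Actually the cleaner route is: either $\C$ contains a Tannakian subcategory of FP dimension $p^i$ with $i \geq 2$ — in which case we are done — or every Tannakian subcategory of $\C$ has FP dimension $1$ or $p$. In the latter case, take the order-$p$ Tannakian $\E$ from Lemma \ref{lem33} and de-equivariantize. I would then argue that the trivial component $\D := (\C_{\mathbb{Z}/p})_e$ is again an integral ASF modular category of dimension $p^{n-2}m$, so by induction on $n$ either $\D$ is group-theoretical or $\D$ contains a Tannakian subcategory of FP dimension $p^i$ with $i \geq 2$. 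If $\D$ is group-theoretical, then since $\C$ is obtained from $\D$ (or from the full crossed category $\C_{\mathbb{Z}/p}$) by an equivariantization by a cyclic $p$-group, and group-theoretical categories are closed under such equivariantizations by nilpotent (in particular cyclic) groups, we conclude $\C$ is group-theoretical. The base cases $n \leq 3$ are handled by Corollary \ref{cor32} (pointed, hence group-theoretical), and one checks $n = 4, 5$ directly, or folds them into the induction.

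The remaining case is that $\D = (\C_{\mathbb{Z}/p})_e$ contains a Tannakian subcategory $\mathcal{T}$ of FP dimension $p^i$ with $i \geq 2$; here the task is to pull this information back up to $\C$ to produce a Tannakian (or at least symmetric, then Tannakian via Lemma \ref{lem22}) subcategory of FP dimension a power of $p$ that is at least $p^2$. The natural candidate is the preimage of $\mathcal{T}$ under the de-equivariantization, or more precisely the fusion subcategory of $\C$ corresponding to $\mathcal{T}$ under the equivariantization correspondence; its FP dimension will be $p \cdot p^i = p^{i+1} \geq p^3$. One must check this preimage is again symmetric in $\C$: this should follow because $\mathcal{T}$ being symmetric in $\D$ combined with the $\mathbb{Z}/p$-equivariant structure forces the corresponding subcategory of $\C$ to lie in its own Müger centralizer, using that $\E$ was Tannakian and central. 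Then Lemma \ref{lem22} (or just oddness when $p > 2$, with a small adjustment when $p = 2$) upgrades symmetric to Tannakian, possibly losing a factor of $2$ but still leaving FP dimension $\geq p^2$.

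The main obstacle I expect is the bookkeeping in the last paragraph: carefully tracking how fusion subcategories and their Müger centralizers behave under de-equivariantization and re-equivariantization, and in particular verifying that a symmetric subcategory downstairs lifts to a symmetric subcategory upstairs of the expected FP dimension (the Galois-type correspondence between subcategories of $\C$ containing $\E$ and subcategories of $\C_{\mathbb{Z}/p}$, as in \cite{drinfeld2010braided}). The $p = 2$ case needs extra care because Lemma \ref{lem22}(2) only yields a Tannakian subcategory of half the dimension, so I would make sure that halving $p^{i+1}$ with $i \geq 2$ still leaves an exponent $\geq 2$, which it does since $i+1-1 = i \geq 2$. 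A secondary subtlety is ensuring the induction is well-founded: the dimension drops from $p^n m$ to $p^{n-2} m$, and one has to confirm the de-equivariantized trivial component is genuinely ASF with the \emph{same} square-free part $m$ and the \emph{same} prime $p$, which is immediate from the FP-dimension count but worth stating.
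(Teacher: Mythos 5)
There are two genuine gaps. The most serious is your claim that group-theoretical categories are closed under equivariantization by cyclic groups: this is false in general, and is precisely why the weaker notion of weakly group-theoretical category was introduced in \cite{etingof2011weakly}. The tool actually available (and the one the paper uses, \cite[Theorem 7.2]{naidu2009fusion}) is that a \emph{braided} fusion category is group-theoretical if and only if it is an equivariantization of a \emph{pointed} category. So knowing that $(\C_{\mathbb{Z}/p})_e$ is group-theoretical tells you essentially nothing about $\C$; you would need the whole crossed category $\C_{\mathbb{Z}/p}$ (all graded components, not just the trivial one) to be pointed, which requires an extra nilpotency-plus-square-freeness argument of the kind the paper runs in its $i=n-2$ case. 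The second gap is the lifting step: the correspondence in \cite{drinfeld2010braided} matches subcategories of $\C$ containing $\Rep(\mathbb{Z}/p)$ with $\mathbb{Z}/p$-\emph{stable} subcategories of $\C_{\mathbb{Z}/p}$, so you must first arrange that your Tannakian $\mathcal{T}\subseteq(\C_{\mathbb{Z}/p})_e$ is stable under the action (or replace it by the subcategory generated by its translates, losing control of the dimension), and then verify that the lift is symmetric inside $\C$; you name this as the main obstacle but do not resolve it, and the symmetry of $\mathcal{T}^{\mathbb{Z}/p}$ is not automatic from the symmetry of $\mathcal{T}$.

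You are also making the problem harder than it is for odd $p$: Lemma \ref{lem33} already exhibits $(\C_{ad})_{pt}$ as a \emph{symmetric pointed} subcategory of FP dimension $p^j$ with $j\ge 2$, and for $p$ odd Lemma \ref{lem22} makes it Tannakian outright, which proves the theorem with no induction at all. The paper's entire effort is concentrated on $p=2$ with $\FPdim((\C_{ad})_{pt})=4$, where it combines a parity count of $\Irr_2(\C_{ad})$ with M\"uger's Lemma \ref{mugerlem} (when $i<n-2$) and, when $i=n-2$, a de-equivariantization argument showing $\C_{\mathbb{Z}_2}$ is pointed --- exactly the kind of argument your first gap requires.
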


\begin{proof}
We keep all notations from  Lemma \ref{lem33} and let $\D=(\C_{ad})_{pt}$. If $p$ is odd then $\FPdim(\D)=p^j$ is also odd, and hence $\D$ is a Tannakian subcategory by Lemma \ref{lem22}. So we are done. In the rest of our proof, we assume that $p=2$.

Suppose that $\FPdim(\D)\geqslant2^3$. Then $\Rep(G/\langle u\rangle)$ is a Tannakian subcategory of $\C$ with $\FPdim(\Rep(G/\langle u\rangle))\geqslant2^2$, also by Lemma \ref{lem22} .

Suppose that $\FPdim(\D)=2^2$. Let $a_e^1, \cdots, a_e^t$ be the number of non-isomorphic simple objects of $\C_{ad}$ of dimension $2,2^2,\cdots, 2^t$. Then we have
\begin{equation}\label{eq5}
\begin{split}
4+\sum_{k=1}^ta_e^k2^{2k}=2^{n-i}m''.
\end{split}
\end{equation}
So we get
\begin{equation}\label{eq6}
\begin{split}
a_e^1=2^{n-i-2}m''-1-2\sum_{k=2}^ta_e^k2^{2k-3}.
\end{split}
\end{equation}

If $i<n-2$ then $a_e^1$ is odd. In this case, $|G(\C_{ad})|=\FPdim(\D)=4$ and $|\Irr_2(\C_{ad})|=a_e^1$ is odd. By Lemma \ref{lem21}, there exists $X\in \Irr_2(\C_{ad})$ such that $G[X]=G(\C_{ad})$. In other words, $h\otimes X\cong X$ for all $h\in G(\C_{ad})$.

If $\D$ is not Tannakian then it contains the category $\E$ of super vector spaces. Let $1\neq g\in \E$ be the unique invertible object which generates $\E$ as a symmetric category. Then $g\otimes X\ncong X$ for every simple object of $\C_{ad}$ by \cite[Lemma 5.4]{muger2000galois}. This contradicts the result obtained above. So in this case $\D$ must be Tannakian.

Now we consider the case $i=n-2$ and assume that $\D$ is not Tannakian. The argument in this case is pointed out to the author by Sonia Natale. In this case, $\FPdim(\D)=4$ and $\FPdim(\C_{ad})=4m''$. By Lemma \ref{lem33}, $\D$ has a Tannakian subcategory equivalent to $\Rep(\mathbb{Z}_2)$. This Tannakian subcategory is also contained in $\C_{ad}$. So we have the de-equivariantization $(\C_{ad})_{\mathbb{Z}_2}$ of $\C_{ad}$ by $\Rep(\mathbb{Z}_2)$. Since $(\C_{ad})'\supseteq\D\supseteq \Rep(\mathbb{Z}_2)$, $\Rep(\mathbb{Z}_2)$ is contained in the M\"{u}ger center of $\C_{ad}$. Hence the de-equivariantization $(\C_{ad})_{\mathbb{Z}_2}$ is braided by \cite[Lemma 3.10]{muger2000galois}. The canonical functor $F:\C_{ad}\rightarrow(\C_{ad})_{\mathbb{Z}_2}$ is a dominant braided tensor functor such that $F(\D)\cong\D_{\mathbb{Z}_2}$ by \cite[Proposition 4.22]{drinfeld2010braided}.

Consider the de-equivariantization $\D_{\mathbb{Z}_2}$. Since $\D$ is not Tannakian by our assumption, we have that $\D_{\mathbb{Z}_2}\cong {\rm SuperVec}$ as braided fusion categories \cite[Remark 9.1]{natale2014graphs}. Applying the functor $F$ to $\mathcal{Z}_2(\C_{ad})\supseteq\D$, we have
\begin{equation}\label{eq7}
\begin{split}
{\rm SuperVec}\cong \D_{\mathbb{Z}_2}\subseteq \mathcal{\mathcal{Z}}_2((\C_{ad})_{\mathbb{Z}_2}).
\end{split}
\end{equation}

By \cite[Lemma 7.2]{natale2012fusion}, the FP dimensions of simple objects of $(\C_{ad})_{\mathbb{Z}_2}$ are powers of 2. On the other hand, $(\C_{ad})_{\mathbb{Z}_2}$ is braided and hence the FP dimension of every simple object of it divides $\FPdim((\C_{ad})_{\mathbb{Z}_2})=2m''$. It follows that the category $(\C_{ad})_{\mathbb{Z}_2}$ only has simple objects with FP dimension 1 or 2. If $(\C_{ad})_{\mathbb{Z}_2}$ has a simple object $X$ of FP dimension 2 then equation (\ref{eq7}) implies that $g\otimes X\cong X$, where $g$ is the generator of $\SuperV$. This contradicts Lemma \ref{mugerlem}. Therefore, $(\C_{ad})_{\mathbb{Z}_2}$ is pointed.

We now consider the functor $\tilde{F}:\C\rightarrow\C_{\mathbb{Z}_2}$. Again by \cite[Proposition 4.22]{drinfeld2010braided}, we have
\begin{equation}\label{eq8}
\begin{split}
(\C_{\mathbb{Z}_2})_{ad}\subseteq\tilde{F}(\C_{ad})=(\C_{ad})_{\mathbb{Z}_2}.
\end{split}
\end{equation}

Hence $(\C_{\mathbb{Z}_2})_{ad}$ is also pointed. It follows that $\C_{\mathbb{Z}_2}$ is nilpotent, and hence $\FPdim(X)^2$ divides $\FPdim((\C_{\mathbb{Z}_2})_{ad})$ for all $X\in \Irr(\C_{\mathbb{Z}_2})$, by \cite[Corollary 5.3]{gelaki2008nilpotent}. Since $\FPdim(\C_{ad})_{\mathbb{Z}_2}=2m''$ is square-free, equation (\ref{eq8}) shows that $\FPdim((\C_{\mathbb{Z}_2})_{ad})$ is also square-free. Hence $\FPdim(X)=1$ for all $X\in \Irr(\C_{\mathbb{Z}_2})$. That is, $\C_{\mathbb{Z}_2}$ is pointed, hence $\C$ is group-theoretical by \cite[Theorem 7.2]{naidu2009fusion}. This completes the proof.
\end{proof}

\begin{cor}\label{cor35}
If $n\leq 5$ then $\C$ is group-theoretical.
\end{cor}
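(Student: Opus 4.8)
The plan is to leverage Theorem \ref{thm35} together with the dimension restriction coming from Lemma \ref{newadded}. By Corollary \ref{cor32} we already know $\C$ is pointed (hence group-theoretical) when $n\leq 3$, so we may assume $n\in\{4,5\}$ and that $\C$ is not pointed. By Theorem \ref{thm35}, either $\C$ is group-theoretical (and we are done), or $\C$ contains a Tannakian subcategory $\E=\Rep(G)$ with $\FPdim(\E)=p^i$ for some $i\geq 2$. So the task reduces to analyzing this second alternative and showing it still forces $\C$ to be group-theoretical when $n\leq 5$.

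First I would apply Lemma \ref{newadded}: since $\C$ is an ASF modular category of dimension $p^nm$ containing $\Rep(G)$ with $|G|=p^i$, the square $|G|^2=p^{2i}$ must divide $\FPdim(\C)=p^nm$; as $\gcd(p,m)=1$ this gives $2i\leq n$, so $i\leq n/2$. Combined with $i\geq 2$ this forces $n\geq 4$, and when $n=4$ or $n=5$ we get $i=2$ exactly, so $\FPdim(\E)=p^2$ and $p^4\mid p^nm$. Next, since $\E=\Rep(G)$ is Tannakian and lies in $\C$ (via the embedding into $\mathcal{Z}(\C)$ guaranteed for a symmetric subcategory of a modular category — here $\E$ is Tannakian of odd or even dimension but as a subcategory of the modular $\C$ it is contained in its own centralizer, hence we may de-equivariantize), I would pass to the de-equivariantization $\C_G$. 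By the relation \eqref{eq1}, $\FPdim(\C_G)=\FPdim(\C)/|G|=p^{n-2}m$, and by Lemma \ref{newadded}'s underlying machinery (\cite[Proposition 4.56]{drinfeld2010braided}) the trivial component $(\C_G)_e$ of the $G$-crossed grading is a modular (non-degenerate) category of dimension $p^{n-2}m/|G|\cdot\text{(something)}$ — more precisely $\FPdim((\C_G)_e)=\FPdim(\C)/|G|^2=p^{n-4}m$, which is an integral modular category.

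Now the key point: when $n=4$, $(\C_G)_e$ has dimension $m$, which is square-free, so it is pointed by the remark at the start of Section \ref{sec3}; when $n=5$, $(\C_G)_e$ has dimension $pm$, and since $p\nmid m$ with $m$ square-free this dimension is square-free as well, so again $(\C_G)_e$ is pointed. Thus in either case $(\C_G)_e$ is pointed, and $\C_G$, being a $G$-crossed extension of a pointed category by the group $G$, is nilpotent; in fact $\C_G$ has all simple objects of dimension dividing $\sqrt{\FPdim((\C_G)_e)}\cdot(\text{grading factor})$, and by \cite[Corollary 5.3]{gelaki2008nilpotent} applied to the nilpotent $\C_G$ together with the square-free dimension of its adjoint (which sits inside $(\C_G)_e$), every simple object of $\C_G$ has dimension $1$ — so $\C_G$ is pointed. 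Therefore $\C$, being an equivariantization $(\C_G)^G$ of a pointed fusion category, is group-theoretical by \cite[Theorem 7.2]{naidu2009fusion} (equivalently, equivariantizations of pointed categories are group-theoretical). This completes the proof.

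The main obstacle I expect is justifying cleanly that the de-equivariantization argument produces a \emph{pointed} $\C_G$ rather than merely a nilpotent one of square-free-ish dimension: one must track that $(\C_{G})_{ad}$ lands inside the trivial component $(\C_G)_e$ whose dimension divides $p^{n-4}m$ (square-free for $n\leq 5$), and then invoke the nilpotency dimension bound. A secondary subtlety is ensuring the Tannakian subcategory $\E$ from Theorem \ref{thm35} genuinely embeds into $\C$ compatibly so that de-equivariantization of $\C$ (not just of $\C_{ad}$) is available — but this is standard for a symmetric subcategory of a modular category, since any Tannakian subcategory of a modular $\C$ admits an embedding $\Rep(G)\hookrightarrow\mathcal{Z}(\C)$ lifting its inclusion, allowing the full de-equivariantization as in Section \ref{sec23}.
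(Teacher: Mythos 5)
Your proposal is correct and follows essentially the same route as the paper: reduce to $n\in\{4,5\}$ via Corollary \ref{cor32} and Theorem \ref{thm35}, pin down the Tannakian subcategory to dimension $p^2$ (the paper does this implicitly; your explicit appeal to Lemma \ref{newadded} to force $i=2$ is a welcome clarification), de-equivariantize, observe that $(\C_G)_e$ has square-free dimension $p^{n-4}m$ and is therefore pointed, conclude $\C_G$ is nilpotent and then pointed via \cite[Corollary 5.3]{gelaki2008nilpotent}, and finish with \cite[Theorem 7.2]{naidu2009fusion}. No substantive differences from the paper's argument.
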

\begin{proof}
By Corollary \ref{cor32} and Theorem \ref{thm35}, it suffices to consider the case that $n=4$ or $5$, and $\C$ has a Tannakian subcategory $\Rep(G)$ of FP dimension $p^2$.

Let $\C_G$ be the de-equivariantization of $\C$ by $\Rep(G)$. Let $\C_G=\oplus_{g\in G}(\C_G)_g$ be the corresponding  grading of $\C_G$. This grading is faithful and $(\C_G)_e$ is a modular category (see Section \ref{sec26}). If $n=4$ then $\FPdim((\C_G)_e)=m$; if $n=5$ then $\FPdim((\C_G)_e)=pm$. In both cases, $(\C_G)_e$ is a pointed modular category by Corollary \ref{cor32}. Hence $\C_G$ is a nilpotent fusion category. By \cite[Corollary 5.3]{gelaki2008nilpotent}, $\FPdim(X)^2$ divides $\FPdim((\C_G)_e)$ for every simple object $X$ of $\C_G$. Since in both cases $\FPdim((\C_G)_e)$ is square-free, $\FPdim(X)=1$ for all simple object $X$ of $\C_G$. This shows that $\C_G$ is a pointed fusion category. Therefore, the modular category $\C$, being an equivariantization of a pointed fusion category, is group-theoretical by \cite[Theorem 7.2]{naidu2009fusion}.
\end{proof}

\section{General properties of integral ASF modular categories}\label{sec5}
\setcounter{equation}{0}
Let $\D$ be a braided fusion category. A Tannakian subcategory $\E\subset \D$ is maximal if it is not contained in any other Tannakian subcategory of $\D$. In this section, we still assume that $\C$ is an ASF modular category with FP dimension $p^nm$ as in Section \ref{sec3}.

\begin{thm}\label{thm34}
Let $\E\cong\Rep(G)$ be a proper maximal Tannakian subcategory of $\C$, and let $\E'$ be its M\"{u}ger centralizer in $\C$. Then the de-equivariantization $(\E')_G$ of $\E'$ by $\Rep(G)$ is pointed. In particular, $\E'$ is group-theoretical.
\end{thm}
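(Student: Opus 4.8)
\textbf{Proof plan for Theorem \ref{thm34}.}

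The plan is to analyze the de-equivariantization $(\E')_G$ using the braided $G$-crossed structure on $\C_G$ together with the dimension bookkeeping that has driven this section so far. First I would record the basic facts: since $\E\cong\Rep(G)$ is Tannakian and contained in $\C$, Lemma \ref{newadded} forces $|G|=p^r$ to be a power of $p$, and $|G|^2$ divides $\FPdim(\C)=p^nm$, so $r\leq n/2$. By equation (\ref{fpdim}), $\FPdim(\E')=\FPdim(\C)/\FPdim(\E)=p^{n-r}m$. Passing to the de-equivariantization, $\FPdim((\E')_G)=\FPdim(\E')/|G|=p^{n-2r}m$. Note that $\E\subseteq\E'$ automatically (Tannakian subcategories are symmetric, hence centralize themselves), so the de-equivariantization is legitimate; moreover $\E$ lies in the M\"uger center of $\E'$, so by \cite[Lemma 3.10]{muger2000galois} the category $(\E')_G$ is braided, and the canonical functor $F:\E'\to(\E')_G$ is a dominant braided tensor functor.

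The next step is to show $(\E')_G$ has only simple objects of prime-power dimension $p^k$. This should follow from \cite[Lemma 7.2]{natale2012fusion} (used already in the proof of Theorem \ref{thm35}): the de-equivariantization of an integral fusion category whose simple objects have FP dimensions in $\{1,p,\dots,p^t\}$ by $\Rep(\mathbb{Z}_p^{\,r})$ again has simple objects of $p$-power dimension. Combined with the fact that $(\E')_G$ is braided, so each simple object has FP dimension dividing $\FPdim((\E')_G)=p^{n-2r}m$ with $m$ square-free and coprime to $p$, we conclude every simple object of $(\E')_G$ has FP dimension a power of $p$. Now I want to leverage maximality of $\E$: if $(\E')_G$ were not pointed, it would contain a simple object of dimension $\geq p$, and I would argue toward a contradiction with the maximality of $\E$. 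The cleanest route: $(\E')_G$ carries the residual grading coming from $\C_G=\oplus_g(\C_G)_g$; more to the point, any non-trivial Tannakian subcategory of $(\E')_G$ pulls back, via $F$, to a Tannakian subcategory of $\E'\subseteq\C$ strictly larger than $\E$ (since $F$ has non-trivial kernel exactly $\Rep(G)$), contradicting maximality — so $(\E')_G$ has \emph{no} non-trivial Tannakian subcategory. But a braided, integral (indeed $p$-power-dimensional) fusion category with no non-trivial Tannakian subcategory, combined with the grading structure, must be pointed: its adjoint subcategory $((\E')_G)_{ad}$ is again $p$-power dimensional and, by the same pullback argument applied to $((\E')_G)_{ad,pt}$ (a symmetric, hence by Lemma \ref{lem22} Tannakian-after-possible-$\mathbb{Z}_2$-quotient, subcategory when $p$ odd; handle $p=2$ as in Theorem \ref{thm35}), cannot contain a non-trivial Tannakian piece unless it is trivial, forcing nilpotency and then, via \cite[Corollary 5.3]{gelaki2008nilpotent} plus square-freeness of the non-$p$ part, pointedness.

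Once $(\E')_G$ is pointed, $\E'$ is an equivariantization of a pointed fusion category by the finite group $G$, hence group-theoretical by \cite[Theorem 7.2]{naidu2009fusion}, giving the final assertion. The main obstacle I anticipate is the middle step: rigorously showing that \emph{maximality of $\E$ in $\C$} forces the absence of non-trivial Tannakian subcategories in $(\E')_G$, and converting that into pointedness — one must be careful that a Tannakian subcategory of $(\E')_G$ genuinely lifts to one of $\C$ (not merely of $\E'$) and that the lift properly contains $\Rep(G)$; this uses the correspondence between fusion subcategories of a de-equivariantization and $G$-stable fusion subcategories of the original category containing $\Rep(G)$, together with the fact that $F$ restricted to $\Rep(G)$ collapses it to $\vect$. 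The $p=2$ case will, as before, need the $\SuperV$-versus-$\Rep(\mathbb{Z}_2)$ dichotomy and Lemma \ref{mugerlem} to rule out dimension-$2$ simples in the relevant adjoint subcategory.
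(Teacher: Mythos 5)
Your overall strategy --- de-equivariantize $\E'$ by $\Rep(G)$ and use maximality of $\E$ to exclude Tannakian subcategories of the quotient --- is the right instinct, but the step you yourself flag as ``the main obstacle'' is a genuine gap, and the claim you extract from it is stronger than what maximality can deliver. The correspondence you invoke identifies fusion subcategories of $(\E')_G$ that are \emph{$G$-stable} with fusion subcategories of $\E'$ containing $\Rep(G)$; a Tannakian subcategory of $(\E')_G$ that is not stable under the relevant (braided) autoequivalences need not lift to a Tannakian subcategory of $\C$ properly containing $\E$. So maximality of $\E$ does \emph{not} give ``$(\E')_G$ has no non-trivial Tannakian subcategory'' (anisotropy); it only gives the weaker property that $(\E')_G$ is \emph{weakly anisotropic}, i.e.\ has no non-trivial Tannakian subcategory stable under all braided autoequivalences. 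This is precisely how the paper proceeds: $\D=(\E')_G$ is the core of $\C$ in the sense of \cite[Section 5.4]{drinfeld2010braided}, hence weakly anisotropic by \cite[Corollary 5.19]{drinfeld2010braided}, and \cite[Corollary 5.29]{drinfeld2010braided} then forces the \emph{canonical} (hence automatically stable) subcategory $\D_{pt}\cap(\D_{pt})'=(\D_{ad})_{pt}$ to be trivial or $\SuperV$. The contradiction comes from pairing this with Lemma \ref{lem33}: since $\E$ is the M\"{u}ger center of $\E'$, the category $\D$ is again an integral ASF \emph{modular} category (non-degenerate by \cite[Remark 2.3]{etingof2011weakly}, with $\FPdim(\E)$ a $p$-power by Lemma \ref{newadded}), so if $\D$ were not pointed then $(\D_{ad})_{pt}$ would have FP dimension $p^j$ with $j\geq 2$, which is incompatible with being trivial or $\SuperV$.

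Your endgame also has defects independent of the above: $\FPdim((\E')_G)=p^{n-2r}m$ with $m>1$, so it is not $p$-power dimensional as you assert parenthetically; and the passage from ``the adjoint subcategory contains no Tannakian piece'' to nilpotency and then to pointedness via \cite[Corollary 5.3]{gelaki2008nilpotent} requires $\FPdim(((\E')_G)_{ad})$ to be square-free, which you have not established. If you replace your full-anisotropy claim by weak anisotropy, apply it only to the canonical subcategory $(\D_{ad})_{pt}$, and contrast the result with Lemma \ref{lem33}, you recover the paper's argument and these difficulties disappear.
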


The existence of $\E$ is guaranteed by Lemma \ref{lem33} and the modularity of $\C$.

\begin{proof}
Let $\D=(\E')_G$. The Tannakian subcategory $\E$ is the M\"{u}ger center of $\E'$. By \cite[Remark 2.3]{etingof2011weakly}, the de-equivariantization $\D$ is non-degenerate. Also, the FP dimension of $\E$ is a power of $p$ by Lemma \ref{newadded}. So $\D$ is an integral ASF modular category. Hence, if $\D$ is not pointed then $\D_{pt}\cap(\D_{pt})'=\D_{pt}\cap\D_{ad}=(\D_{ad})_{pt}$ has FP dimension $p^j$ for some $2\leq j\leq n-2$,  by Lemma \ref{lem33}.

On the other hand, the fusion category $\D$ is called the core of $\C$ in \cite[Section 5.4]{drinfeld2010braided}. It is a weakly anisotropic braided fusion category by \cite[Corollary 5.19]{drinfeld2010braided}. Recall that a braided fusion category is called weakly anisotropic if it has no nontrivial Tannakian subcategories which are stable under all braided autoequivalences of the fusion category. By \cite[Corollary 5.29]{drinfeld2010braided}, $\D_{pt}\cap(\D_{pt})'$ is either trivial, or equivalent to the category ${\rm SuperVec}$ of super vector spaces. Therefore, $\D$ must be pointed, otherwise we will get a contradiction.

The last statement follows from \cite[Theorem 7.2]{naidu2009fusion} which says that a braided fusion category is group-theoretical if and only if it is an equivariantization of a pointed fusion category.
\end{proof}

Now we are ready to describe the structure of an integral ASF modular category by equivariantizations.

\begin{cor}\label{cor36}
The modular category $\C$ is an equivariantization of a nilpotent fusion category of nilpotency class $2$.
\end{cor}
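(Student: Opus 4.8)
The plan is to exhibit $\C$ as an equivariantization by de-equivariantizing along a maximal Tannakian subcategory, and then to check that the resulting fusion category is nilpotent of nilpotency class at most two. If $\C$ is pointed there is nothing to prove, since then $\C$ is itself nilpotent of class $\le 1$ and equals its own equivariantization by the trivial group; so I assume $\C$ is not pointed. Combining Lemma \ref{lem33} with the modularity of $\C$ (which embeds $\C$ into $\mathcal{Z}(\C)$ compatibly with the forgetful functor, exactly as used in Theorem \ref{thm34}), the category $\C$ contains a non-trivial Tannakian subcategory; I fix a maximal one $\E\cong\Rep(G)$, which is proper because a non-degenerate symmetric fusion category is trivial. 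Then I form the de-equivariantization $\C_G$, so that $\C\cong(\C_G)^G$ by the relations recalled in Section \ref{sec23}.

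Next I would use the structure from Section \ref{sec26} and \cite{drinfeld2010braided}: $\C_G=\oplus_{g\in G}(\C_G)_g$ is a braided $G$-crossed fusion category whose grading is faithful because $\C$ is non-degenerate, and whose trivial component $(\C_G)_e$ is the core of $\C$, namely $(\E')_G$, which is pointed by Theorem \ref{thm34}. Hence $\C_G$ is a $G$-extension of a pointed fusion category. The rest is routine: the adjoint subcategory $(\C_G)_{ad}$ lies in the trivial component of any faithful grading (since $X\otimes X^\ast$ is supported at the identity for homogeneous $X$), a fusion subcategory of a pointed category is pointed, and a pointed fusion category has trivial adjoint subcategory. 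Chaining these, $(\C_G)_{ad}\subseteq(\C_G)_e$ is pointed and $((\C_G)_{ad})_{ad}=\vect$, so $\C_G$ is nilpotent of nilpotency class at most $2$. Since $\C\cong(\C_G)^G$, this gives the corollary.

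I do not anticipate a genuine obstacle here, as the statement is essentially a repackaging of Theorem \ref{thm34}. The one spot that needs care is the identification of the trivial component $(\C_G)_e$ of the $G$-crossed grading with the core $(\E')_G$, together with the faithfulness of that grading; both follow from \cite{drinfeld2010braided} combined with the non-degeneracy of $\C$. As a minor point, ``nilpotency class $2$'' in the statement should be read as ``at most $2$'' --- nilpotency class $\le 1$ occurs precisely when $\C_G$ is already pointed, in which case $\C$ is moreover group-theoretical.
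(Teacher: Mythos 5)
Your proposal is correct and follows essentially the same route as the paper: de-equivariantize along a proper maximal Tannakian subcategory $\E\cong\Rep(G)$, identify the trivial component $(\C_G)_e$ with the core $(\E')_G$ via \cite[Proposition 4.56]{drinfeld2010braided}, and invoke Theorem \ref{thm34} to conclude it is pointed, so that $\C_G$ is nilpotent of class at most $2$. The extra details you supply (handling the pointed case, locating $(\C_G)_{ad}$ in the trivial component, and reading ``class $2$'' as ``at most $2$'') are sound refinements of the paper's argument rather than a different approach.
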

\begin{proof}
Let $\E\cong\Rep(G)$ be a proper maximal Tannakian subcategory of $\C$. Then we can form the de-equivariantization of $\C_G$ of $\C$ by $\E$. It is known that $\C_G$ has a faithful $G$-grading and the trivial component $(\C_G)_e$ is non-degenerate. By \cite[Proposition 4.56]{drinfeld2010braided}, $(\C_G)_e=(\E')_G$. Hence, $(\C_G)_e$ is pointed by Theorem \ref{thm34}, and hence $\C$ is an equivariantization of a nilpotent fusion category of nilpotency class $2$.
\end{proof}

Restricting to the case where $m=q$ is a prime number allows us to obtain one of the main results of this paper.

\begin{thm}\label{thm37}
Let $\C$ be an integral modular category of FP dimension $p^nq$, where $q<p$ are prime numbers. Then $\C$ is group-theoretical.
\end{thm}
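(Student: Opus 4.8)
The plan is to combine the structural result of Corollary~\ref{cor36} with the primary decomposition of nilpotent fusion categories, the hypothesis $q<p$ entering at exactly one place. By Corollary~\ref{cor35} we may assume $n\geq 6$, and we may assume that $\C$ is not pointed, since pointed categories are group-theoretical. By Lemma~\ref{lem33} and the modularity of $\C$, there is a proper maximal Tannakian subcategory $\E\cong\Rep(G)$ of $\C$, and $|G|=p^{s}$ for some $s\geq 1$ by Lemma~\ref{newadded}. Arguing as in the proof of Corollary~\ref{cor36}, the de-equivariantization $\C_G$ is a nilpotent fusion category of nilpotency class $2$ with $\C\cong(\C_G)^{G}$, and $\FPdim(\C_G)=p^{n-s}q$ by (\ref{eq1}). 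Moreover, since $\FPdim(\C)=p^{n}q$ with $q$ prime, \cite[Lemma~1.2]{etingof1998some} forces $\FPdim(X)^{2}$ to be a power of $p$ for every simple object $X$ of $\C$; hence, by \cite[Lemma~7.2]{natale2012fusion}, every simple object of $\C_G$ has Frobenius--Perron dimension a power of $p$, and in particular $\C_G$ is integral.

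Next I would invoke the primary decomposition of nilpotent fusion categories \cite{gelaki2008nilpotent}: $\C_G\cong\C_G^{(p)}\boxtimes\C_G^{(q)}$, where $\FPdim(\C_G^{(p)})=p^{n-s}$ and $\FPdim(\C_G^{(q)})=q$. Being of prime Frobenius--Perron dimension, $\C_G^{(q)}$ is pointed, so $\C_G^{(q)}\cong\vect_{\mathbb{Z}_q}^{\omega}$ for some $3$-cocycle $\omega$; and $\C_G^{(p)}$, being equivalent to a fusion subcategory of the integral category $\C_G$, is an integral fusion category of Frobenius--Perron dimension $p^{n-s}$.

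The heart of the argument is the analysis of the $G$-action on $\C_G$ arising from $\C\cong(\C_G)^{G}$. First, $\C_G^{(q)}$ is the unique maximal fusion subcategory of $\C_G$ of Frobenius--Perron dimension prime to $p$: any such subcategory has dimension dividing $p^{n-s}q$ and prime to $p$, hence equal to $1$ or $q$, and two distinct fusion subcategories of dimension $q$ would generate one of dimension at least $q^{2}$, which does not divide $p^{n-s}q$. Hence $\C_G^{(q)}$ is stable under every tensor autoequivalence of $\C_G$, in particular under the $G$-action. The group of isomorphism classes of tensor autoequivalences of $\vect_{\mathbb{Z}_q}^{\omega}$ embeds into $\operatorname{Aut}(\mathbb{Z}_q)$, a group of order $q-1$; since $q<p$ we have $\gcd(|G|,q-1)=1$, so the $p$-group $G$ fixes every object of $\C_G^{(q)}$, and as the obstruction to, and the isomorphism classes of, such an object-fixing action lie in $H^{3}$ and $H^{2}$ of $G$ with coefficients in a finite group of order $q$, they vanish because $\gcd(|G|,q)=1$; thus the $G$-action on $\C_G^{(q)}$ is trivial. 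Consequently the equivariantization splits as $\C\cong(\C_G^{(p)})^{G}\boxtimes\C_G^{(q)}$. Now $(\C_G^{(p)})^{G}$ is an integral fusion category of Frobenius--Perron dimension $|G|\,\FPdim(\C_G^{(p)})=p^{s}\cdot p^{n-s}=p^{n}$, hence group-theoretical by \cite[Corollary~6.8]{drinfeld2007group}, while $\C_G^{(q)}$ is pointed, hence group-theoretical; since a Deligne tensor product of group-theoretical fusion categories is group-theoretical (it is Morita equivalent to a tensor product of pointed categories, which is pointed), $\C$ is group-theoretical.

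The step I expect to be the main obstacle, and the only place where $q<p$ is genuinely used, is the triviality of the $G$-action on the $q$-primary part $\C_G^{(q)}$: this rests on isolating $\C_G^{(q)}$ intrinsically inside $\C_G$ and on the vanishing of the relevant cohomology of the $p$-group $G$. A secondary point to pin down against references is the primary decomposition $\C_G\cong\C_G^{(p)}\boxtimes\C_G^{(q)}$ together with the identity $(\A\boxtimes\B)^{G}\cong\A^{G}\boxtimes\B$ for an action trivial on $\B$; both are standard but should be cited carefully.
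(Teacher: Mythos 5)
Your overall strategy (de-equivariantize by a maximal Tannakian subcategory, split off the $q$-part, and reduce to the prime-power case) is genuinely different from the paper's, but it has a gap at its central step: the primary decomposition $\C_G\cong\C_G^{(p)}\boxtimes\C_G^{(q)}$. The decomposition theorem actually available here --- the main result of \cite{drinfeld2007group}, quoted in the paper --- applies to \emph{braided} nilpotent fusion categories, and $\C_G$ is not braided: it is only a braided $G$-crossed fusion category (its trivial component is braided, but $\C_G$ itself carries no braiding). Your citation of \cite{gelaki2008nilpotent} does not supply the non-braided version; that reference is used in this paper only for the universal grading, the divisibility $\FPdim(X)^2\mid\FPdim(\C_{ad})$ in nilpotent categories, and facts about modular categories. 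Without a decomposition of $\C_G$ into coprime factors, the subsequent steps (uniqueness and $G$-stability of the $q$-part, triviality of the action on it, and the splitting $\C\cong(\C_G^{(p)})^G\boxtimes\C_G^{(q)}$) have nothing to act on. Even granting a decomposition, two further points are asserted rather than proved: that every fusion subcategory of $\A\boxtimes\B$ with coprime dimensions splits (needed to see that $\C_G^{(q)}$, and also $\C_G^{(p)}$, are canonical and hence $G$-stable), and that an action which is trivial on one factor up to isomorphism and unobstructed cohomologically yields $(\A\boxtimes\B)^G\cong\A^G\boxtimes\B$. These are plausible but are exactly the kind of statements that must be pinned to precise references.

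For comparison, the paper avoids the non-braided decomposition entirely. It first observes that if $q\mid\FPdim(\C_{pt})$ then $\C_{ad}=(\C_{pt})'$ has dimension a power of $p$, so $\C$ is nilpotent and hence group-theoretical by \cite[Corollary 6.2]{drinfeld2007group}. Otherwise $\FPdim(\C_{pt})$ is a power of $p$; taking a maximal Tannakian $\E=\Rep(G)$, the centralizer $\E'$ is a genuinely \emph{braided} subcategory of $\C$, is group-theoretical by Theorem \ref{thm34}, and --- this is where $q<p$ enters, via \cite[Proposition 4.11]{DoTu2015} --- is nilpotent; the braided primary decomposition then yields a pointed factor of dimension $q$, contradicting $\FPdim(\C_{pt})=p^i$. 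If you want to salvage your route, you should either locate a primary decomposition theorem valid for non-braided nilpotent categories of coprime-power dimension, or redirect the decomposition to the braided category $\E'$ as the paper does.
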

\begin{proof}
It suffices to prove that $\FPdim(\C_{pt})$ can not be a power of $p$. Indeed, if $\FPdim(\C_{pt})$ is not a power of $p$ then $\FPdim(\C_{pt})=p^iq$ for some $2\leq i\leq n-2$ by Lemma \ref{lem30} and Lemma \ref{lem31}. By equalities (\ref{cptad}) and (\ref{fpdim}), $\FPdim(\C_{ad})=\FPdim((\C_{pt})')=p^{n-i}$. This means that $\C_{ad}$ is nilpotent \cite[Theorem 8.28]{etingof2005fusion}, and so is $\C$. Hence $\C$ is a group-theoretical fusion category by \cite[Corollary 6.2]{drinfeld2007group}.

Suppose on the contrary that $\FPdim(\C_{pt})=p^i$ for some $2\leq i\leq n-2$. Let $\E=\Rep(G)$ be a proper maximal Tannakian subcategory of $\C$. By Lemma \ref{newadded}, $\FPdim(\E)=p^j$ for some $j\leq i\leq n-2$. The M\"{u}ger centralizer $\E'$ of $\E$ in $\C$ is group-theoretical by Theorem \ref{thm34}. Under our assumption $q<p$, this fact implies that $\E'$ is nilpotent by \cite[Proposition 4.11]{DoTu2015}. The main result of \cite{drinfeld2007group} says that a braided nilpotent fusion category has a unique
decomposition as a tensor product of braided fusion categories whose FP dimensions are distinct prime powers. Again by equality (\ref{fpdim}),
we know $\FPdim(\E')=p^{n-j}q$. So we have
$$\E'\cong \A_q\boxtimes\A_{p^{n-j}},$$
where $\A_t$ is a fusion category with FP dimension $t$. By \cite[Corollary 8.30]{etingof2005fusion}, $\A_q$ is a pointed fusion category, hence $\E'$ and also $\C$ contain a pointed fusion subcategory of FP dimension $q$. This contradicts our assumption that $\FPdim(\C_{pt})$ is a power of $p$.
\end{proof}

\section{Low-dimensional integral modular categories}\label{sec6}
Let $\C$ be a fusion category, and let $1=a_0<a_1<\cdots<a_m$ be the distinct FP dimensions of simple objects of $\C$. If $n_i$ is the number of non-isomorphic simple objects of FP dimension $a_i$ then we say $\C$ has category type $(1, n_0; a_1, n_1; \cdots; a_m, n_m)$. We summarize some results in the literature to establish a criterion for an array $(1, n_0; a_1, n_1; \cdots; a_m, n_m)$ to be a category type of an odd-dimensional integral modular category.

\begin{lem}\label{lem41}
Let $(1, n_0; a_1, n_1; \cdots; a_m, n_m)$ be a category type of an integral modular category. Suppose that $\C$ has odd FP dimension. Then

(1) $a_i$ is odd and $n_i$ is even, for all $1\leqslant i\leqslant m$;

(2) $n_0$ divides $\FPdim(\C)$ and $n_ia_i^2$, for all $1\leqslant i\leqslant m$;

(3) $a_i^2$ divides $\FPdim(\C)$, for all $1\leqslant i\leqslant m$;

(4) $n_0a_m^2\leqslant \FPdim(\C)$.
\end{lem}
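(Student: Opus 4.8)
The four parts are essentially independent; my plan is to handle them in turn, using part~(3) inside part~(1) and isolating the single nontrivial outside ingredient in part~(1).

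Part~(3) requires nothing new: by \cite[Lemma 1.2]{etingof1998some} the square of the FP dimension of any simple object of $\C$ divides $\FPdim(\C)$, so $a_i^{2}\mid\FPdim(\C)$ for all $i$. Part~(1) then has two halves. That each $a_i$ is odd is immediate, since $a_i^{2}$ divides the odd integer $\FPdim(\C)$. For the parity of $n_i$, I would use the fact that an odd-dimensional integral modular category has no nontrivial self-dual simple objects --- the categorical counterpart of the classical statement that a group of odd order has no nontrivial self-dual irreducible representation, which is part of the standard toolkit for the odd-dimensional setting (it can be extracted from the Cauchy theorem for fusion categories together with the Frobenius--Schur indicator formula). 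Granting it, the duality $X\mapsto X^{*}$ preserves FP dimension and hence acts on $\Irr_{a_i}(\C)$; for $i\geq1$ this set consists of nontrivial simples, so the action is a fixed-point-free involution, forcing $n_i=|\Irr_{a_i}(\C)|$ to be even.

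For part~(2), write $n_0=\FPdim(\C_{pt})$. Since $\C_{pt}$ is a fusion subcategory, $n_0\mid\FPdim(\C)$ by \cite[Proposition 8.15]{etingof2005fusion}. For $n_0\mid n_ia_i^{2}$ I would decompose $\Irr_{a_i}(\C)$ into orbits under the tensoring action of $G(\C)$: an orbit through $X$ has length $|G(\C)|/|G[X]|=n_0/|G[X]|$, and by the standard divisibility $|G[X]|\mid\FPdim(X)^{2}=a_i^{2}$, the contribution $a_i^{2}\cdot n_0/|G[X]|=n_0\cdot\bigl(a_i^{2}/|G[X]|\bigr)$ of that orbit to $n_ia_i^{2}$ is a multiple of $n_0$; summing over the orbits gives $n_0\mid n_ia_i^{2}$.

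For part~(4), I would fix a simple $X$ with $\FPdim(X)=a_m$ and set $\D=\langle X\otimes X^{*}\rangle$, the fusion subcategory generated by $X\otimes X^{*}$. The key step is the estimate $\FPdim(\D)\geq a_m^{2}$: for every simple $Z$ one has $[X\otimes X^{*}:Z]=[Z\otimes X:X]\leq\FPdim(Z\otimes X)/\FPdim(X)=\FPdim(Z)$, whence $a_m^{2}=\FPdim(X\otimes X^{*})=\sum_{Z}[X\otimes X^{*}:Z]\,\FPdim(Z)\leq\sum_{Z}\FPdim(Z)^{2}\leq\FPdim(\D)$, the last sum running over $\Irr(\D)$. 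Since $X\otimes X^{*}$ lies in $\C_{ad}$ we have $\D\subseteq\C_{ad}$, so modularity together with (\ref{cptad}) and (\ref{fpdim}) yields $\FPdim(\C)=\FPdim(\C_{pt})\FPdim(\C_{ad})=n_0\FPdim(\C_{ad})\geq n_0\FPdim(\D)\geq n_0a_m^{2}$.

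Apart from the one input to part~(1), every step is routine manipulation of the dimension identities already recorded in Section~\ref{sec2}, so I expect the main obstacle to be nailing down a clean reference for (or, failing that, reproving via Frobenius--Schur indicators) the absence of nontrivial self-dual simple objects in an odd-dimensional integral modular category.
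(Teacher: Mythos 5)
Your proposal is correct and follows essentially the same route as the paper, whose proof consists almost entirely of citations ((1) reduces to ``a nontrivial self-dual simple object forces even dimension,'' (2) is the orbit-counting divisibility of \cite{dong2012frobenius}, (3) is \cite{etingof1998some}, and (4) is the same count of $\FPdim(\C_{ad})=\FPdim(\C)/n_0$ via the universal grading); you have simply unpacked those citations with valid arguments. The only cosmetic differences are that you derive the oddness of $a_i$ directly from (3) and phrase (4) through $\langle X\otimes X^*\rangle\subseteq\C_{ad}$ rather than through the graded components, which amounts to the same thing.
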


\begin{proof}
(1) If there exists $i$ such that $a_i$ is even, then $\FPdim(\C)$ is also even  by \cite[Lemma 5.3]{dong2014existence}. This is a contradiction. If there exists $i$ such that $n_i$ is odd then there must exist $X\in \Irr_{a_i}(\C)$ such that $X\cong X^\ast$. In this case $\FPdim(\C)$ is even by \cite[Lemma 5.2]{dong2014existence}. This is also a contradiction.

(2) It follows from \cite[Lemma 2.2]{dong2012frobenius}.

(3) It follows from \cite[Lemma 1.2]{etingof1998some}.

(4) Counting the FP dimension of every component of the universal grading of $\C$, we get $n_0a_m^2\leqslant \FPdim(\C)$ (Note that $n_0=|\mathcal{U}(\C)|$).
\end{proof}

Let $\C$ be a fusion category and let $G$ be a group acting on $\C$ by tensor autoequivalences. Recall from Section \ref{sec23} that we have the fusion category $\C^G$ of $G$-equivalent objects of $\C$. Let $F:\C^G\to \C$ be the forgetful functor. It is a surjective tensor functor. That is, for any object $Y$ of $\C$ there is an object $X$ in $\C^G$ such that $Y$ is a subobject of $F(X)$. The Lemma \ref{lem51} below is \cite[Lemma 7.2]{natale2014graphs}.

\begin{lem}\label{lem51}
Let $\C^G$ be the equivariantization of $\C$ under the action of $G$. Let $X$ be a simple object of $\C^G$. If $\FPdim(X)$ is relatively prime to the order of $G$ then $F(X)$ is a simple object of $\C$.
\end{lem}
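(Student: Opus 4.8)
The plan is to combine the fact that the forgetful functor preserves Frobenius--Perron dimensions with the Clifford-type decomposition of $F(X)$ forced by the $G$-action, and then to read off the statement from the arithmetic of $\FPdim(X)$ and $|G|$.

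First I would record that $F\colon\C^G\to\C$ is a tensor functor between fusion categories, so $\FPdim(F(X))=\FPdim(X)$ for every object $X$ of $\C^G$ \cite{etingof2005fusion}. Next, an object of $\C^G$ carries isomorphisms $u_g\colon g(F(X))\to F(X)$, so $F(X)$ is $G$-stable; since each $g$ is a tensor autoequivalence, the corresponding permutation of $\Irr(\C)$ permutes the simple constituents of $F(X)$ while preserving their multiplicities and their FP dimensions. Write $F(X)=\bigoplus_i Y_i^{\oplus m_i}$ with the $Y_i$ pairwise non-isomorphic and simple, so $m_i=m_j$ and $\FPdim(Y_i)=\FPdim(Y_j)$ whenever $Y_j\cong g(Y_i)$. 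The $u_g$ turn $\End_\C(F(X))\cong\prod_i M_{m_i}(k)$ into a $G$-algebra whose subalgebra of $G$-invariants is (the image of) $\End_{\C^G}(X)$, and this is $k$ because $X$ is simple. Since the $G$-action preserves each orbit of constituents, this invariant subalgebra contains the $G$-invariant central idempotent of each orbit-block; as $\dim_k\End_{\C^G}(X)=1$, there is exactly one orbit. Hence $F(X)=\bigoplus_{i=1}^{d}Y_i^{\oplus m}$, where $\{Y_1,\dots,Y_d\}$ is a single $G$-orbit of common FP dimension, $d=[G:H]$ with $H=\{g\in G:g(Y_1)\cong Y_1\}$, and --- by the Clifford-theoretic analysis of the forgetful functor of an equivariantization (see \cite{natale2014graphs} and the references therein) --- the $m$-dimensional multiplicity space of $Y_1$ is an irreducible $\alpha$-projective representation of $H$ for some class $\alpha\in H^2(H,k^\times)$.

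It then remains to count dimensions. From $\FPdim(F(X))=\FPdim(X)$ we obtain $\FPdim(X)=d\,m\,\FPdim(Y_1)$, so $dm$ divides $\FPdim(X)$. On the other hand $d=[G:H]$ divides $|G|$, while $m$, being the degree of an irreducible projective representation of $H$, divides $|H|$: choosing a finite central extension $1\to A\to\widetilde H\to H\to1$ that trivializes $\alpha$, with $A$ a central (hence normal abelian) subgroup of index $|H|$, such representations are ordinary irreducible representations of $\widetilde H$, whose degrees divide $[\widetilde H:A]=|H|$ by Ito's theorem. Therefore $dm$ divides $[G:H]\,|H|=|G|$, so $dm$ divides $\gcd(\FPdim(X),|G|)=1$; this forces $d=m=1$, i.e.\ $F(X)=Y_1$ is simple.

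I expect the main obstacle to be the structural step: rigorously pinning down that $F(X)$ is the sum over a single $G$-orbit with constant multiplicity equal to the degree of an irreducible projective representation of the stabilizer $H$. This is precisely Clifford theory for equivariantizations, and the cleanest route is to invoke it (as in \cite{natale2014graphs}) rather than to redevelop it from scratch; once it is in hand, the divisibility computation above is routine.
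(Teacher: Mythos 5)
The paper offers no proof of this lemma at all: it is stated as a quotation of \cite[Lemma 7.2]{natale2014graphs}, so there is nothing internal to compare against. Your argument is correct and is essentially the standard proof of the cited result. The Clifford-theoretic input you invoke --- that a simple $X$ in $\C^G$ satisfies $F(X)\cong\bigoplus_{gH\in G/H}g(Y)^{\oplus m}$ for a single $G$-orbit of simples with stabilizer $H$ and with $m$ the degree of an irreducible $\alpha$-projective representation of $H$ --- is exactly the parametrization of $\Irr(\C^G)$ used in that reference, and your derivation of the single-orbit property from $\End_{\C^G}(X)=\End_{\C}(F(X))^G\cong k$ is sound, as is the bound $m\mid |H|$ via a central extension trivializing $\alpha$ and Ito's theorem. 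The only step worth flagging is the assertion that $dm$ divides $\FPdim(X)$: from $\FPdim(X)=dm\,\FPdim(Y_1)$ this requires knowing that $\FPdim(Y_1)=\FPdim(X)/(dm)$ is a rational algebraic integer, hence an integer; this is automatic since FP dimensions of simple objects are algebraic integers and the hypothesis that $\FPdim(X)$ is coprime to $|G|$ presupposes $\FPdim(X)\in\mathbb{Z}$ (and in this paper $\C$ is integral anyway), but it deserves a sentence. With that noted, the divisibility computation $dm\mid\gcd(\FPdim(X),|G|)=1$ closes the proof as you describe.
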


\begin{thm}\label{thm52}
Let $\C$ be a modular category such that $\FPdim(\C)$ is odd and $\FPdim(\C)<1125$. Then $\C$ is group-theoretical.
\end{thm}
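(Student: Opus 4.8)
The plan is to combine the results already obtained with a short arithmetic reduction, so that only finitely many exceptional Frobenius--Perron dimensions remain to be treated directly. Write $N=\FPdim(\C)$. Since $N$ is odd, $\C$ is integral (a weakly integral fusion category of odd Frobenius--Perron dimension is integral), so the results of Sections \ref{sec3}--\ref{sec5} apply to it.

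First I would dispose of the generic values of $N$. If $N$ is a prime power, then $\C$ is group-theoretical by \cite{drinfeld2007group}. If $N=p^nm$ with $m>1$ square-free and $\gcd(p,m)=1$ — that is, $\C$ is ASF — then, $m$ being odd, square-free and prime to $p$, we have $m\geq 3$, so $3p^n\leq N<1125$, whence $p^n<375$ and $n\leq 5$; Corollary \ref{cor35} then shows $\C$ is group-theoretical (and if moreover $m$ is a prime smaller than $p$ one could instead invoke Theorem \ref{thm37}). Thus $\C$ is group-theoretical unless $N$ is divisible by the squares of two distinct primes, which for odd $N<1125$ leaves exactly
\[
N\in\{\,225=3^2\!\cdot\!5^2,\ \ 441=3^2\!\cdot\!7^2,\ \ 675=3^3\!\cdot\!5^2,\ \ 1089=3^2\!\cdot\!11^2\,\};
\]
for each of these it suffices to prove that $\C$ is pointed.

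Consider first $N=p^2q^2$, i.e. the cases $225$, $441$, $1089$. By Lemma \ref{lem41} the simple objects of $\C$ have Frobenius--Perron dimension $1$, $p$ or $q$ — a simple object of dimension $pq$ is impossible, since $(pq)^2=N$ would leave no room for the unit — so $N=\FPdim(\C_{pt})+p^2n_p+q^2n_q$ with $n_p,n_q$ even. I would then combine this with $\FPdim(\C_{pt})\FPdim(\C_{ad})=N$, with the fact that $X\otimes X^{*}\in\C_{ad}$ for every simple $X$ while the braided category $\C_{ad}$ has all its simple objects of dimension dividing $\FPdim(\C_{ad})$, and — when $\FPdim(\C_{pt})$ and $\FPdim(\C_{ad})$ are coprime — with M\"uger's decomposition $\C\cong\C_{pt}\boxtimes\C_{ad}$, which reduces us to modular categories of prime-square dimension (pointed by Corollary \ref{cor32}). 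Running through the finitely many possibilities forces $n_p=n_q=0$, i.e. $\C=\C_{pt}$.

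The dimension $N=675$ is the crux, and I expect it to be the main obstacle: here Lemma \ref{lem41} alone leaves several admissible category types, with simple objects of dimensions $1,3,5,15$, so counting must be supplemented by the modular structure. Assuming $\C$ is not pointed, I would show — by an argument in the spirit of Lemma \ref{lem33}, using also that $\C$ is solvable since $\FPdim(\C)=3^3\cdot5^2$ — that $\C$ contains a nontrivial Tannakian subcategory $\Rep(\mathbb{Z}_\ell)$ with $\ell\in\{3,5\}$; de-equivariantizing, the trivial component of the resulting $\mathbb{Z}_\ell$-crossed braided category is a modular category of dimension $675/\ell^2\in\{75,27\}$, which is pointed by Corollary \ref{cor32}. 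Transporting this information back up the equivariantization by means of Lemma \ref{lem51}, together with the relation $\FPdim(\C_{pt})\FPdim(\C_{ad})=675$ and the remaining numerical bounds, one eliminates every non-pointed category type, so $\C$ is pointed. Since a pointed fusion category is group-theoretical, this finishes the proof once $\C$ has been shown to be pointed in each of the four exceptional cases.
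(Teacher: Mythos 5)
Your reduction of the problem is sound: odd dimension gives integrality, prime powers are disposed of by \cite{drinfeld2007group}, and for ASF dimensions $p^nm$ with $m>1$ the bound $N<1125$ does force $n\le 5$, so Corollary \ref{cor35} applies. Your list of the remaining odd non-ASF dimensions below $1125$, namely $225,\,441,\,675,\,1089$, is also correct; in fact you have isolated cases that the paper's own proof does not treat, since the paper asserts that $675$ is the only odd non-ASF dimension below $1125$, which is not accurate ($225=3^2\cdot5^2$, $441=3^2\cdot7^2$ and $1089=3^2\cdot11^2$ are not ASF either).

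The genuine gap is in your treatment of $N=p^2q^2$: it is \emph{false} that such a category must be pointed, so no amount of arithmetic with $\FPdim(\C_{pt})\FPdim(\C_{ad})=N$ and the constraints of Lemma \ref{lem41} can force $n_p=n_q=0$. A concrete counterexample is $\C=\mathcal{Z}(\vect_G)$ with $G=\mathbb{Z}_7\rtimes\mathbb{Z}_3$: this is an integral modular category of odd dimension $441=3^2\cdot7^2$ and category type $(1,3;3,16;7,6)$, hence not pointed (though it is group-theoretical). Here $\FPdim(\C_{pt})=3$ and $\FPdim(\C_{ad})=147$ are not coprime, so your M\"uger decomposition step never applies, and every other constraint you list is satisfied by this type. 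To close the $p^2q^2$ cases you must aim at group-theoreticity rather than pointedness --- for instance by invoking the classification of integral modular categories of dimension $p^2q^2$ in \cite{bruillard2013classification}, or by exhibiting a Tannakian subcategory of dimension $pq$ (as $\Rep(G)\subset\mathcal{Z}(\vect_G)$ in the example) whose de-equivariantization is pointed; note that Lemma \ref{newadded} does not restrict such a subcategory to prime power dimension because $p^2q^2$ is not ASF. Your sketch for $675$ is in the same spirit as the paper's argument, which lists the admissible category types, rules most out by counting invertibles in the components of the universal grading, and kills the type $(1,3;3,8;5,6;15,2)$ by finding a Tannakian subcategory of $\C_{ad}$ via \cite[Theorem 4.2]{dong2014existence} and contradicting Lemma \ref{lem51} after de-equivariantizing; but as written your version asserts rather than proves both the existence of the Tannakian subcategory and the elimination of all non-pointed types, so that step also needs to be carried out explicitly.
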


\begin{proof}
First, the assumption that $\FPdim(\C)$ is odd implies that $\C$ is integral \cite[Corollary 3.11]{gelaki2008nilpotent}. Second, the assumption that $\FPdim(\C)<1125$ implies that $\C$ is an integral ASF modular category except the case when  $\FPdim(\C)=675$. Suppose that $\C$ is an integral ASF modular category and the FP dimension $p^nm$ is odd and less than $1125$, where $p$ is a prime number, $m$ is a square-free natural number and ${\rm gcd}(p,m)=1$. If $m=1$ then $\FPdim(\C)=p^n$. In this case, $\C$ is group-theoretical by \cite[Corollary 6.8]{drinfeld2007group}. If $m\neq 1$ then $n\leq 5$. In this case, $\C$ is group-theoretical by Corollary \ref{cor32} and Corollary \ref{cor35}. Therefore, in the rest of the proof, we only need to consider the case when $\FPdim(\C)=675$.

By Lemma \ref{lem41}, the modular category $\C$ has the following possible types:
\begin{align*}
&(1,45;3,70),(1,27;3,72),(1,9;3,74),\\
&(1,3;3,8;5,6;15,2), (1,675).
\end{align*}

Suppose that $\C$ has category type $(1,45;3,70)$. Then the universal grading has $45$ components and every component has FP dimension $15$. Hence, any component contains at most $1$ simple object of FP dimension $3$, and so there should be at least $270$ non-isomorphic invertible simple objects. This is impossible since  $\FPdim(\C_{pt})=45$. The same argument shows that $\C$ can not have category types $(1,27;3,72)$ and $(1,9;3,74)$.

Suppose that $\C$ is of type $(1,3;3,8;5,6;15,2)$. It follows that every component $\C_g$ has FP dimension $225$ for all $g\in\mathcal{U}(\C)$, and the only possible category type of the trivial component $\C_{ad}$ is $(1,3;3,8;5,6)$. By \cite[Theorem 4.2]{dong2014existence}, $\C_{ad}$ has a non-trivial Tannakian subcategory $\D=\Rep(G)$. Counting the dimension of simple objects of $\C_{ad}$, we know that $\C_{ad}$ can not have fusion subcategory of FP dimension $15$ and $45$. Also, $\FPdim(\D)\ne 75$ by Lemma \ref{newadded}. So the only possibility of $\FPdim(\D)$ is $3$.

If $\FPdim(\D)=3$ then $\D=\C_{pt}$. By \cite[Corollary 6.8]{gelaki2008nilpotent}, $(\C_{ad})'=\D$. On the other hand, $\D$ is a fusion subcategory of $\C_{ad}$. Hence, $\D$ is the M\"{u}ger center of $\C_{ad}$. By \cite[Remark 2.3]{etingof2011weakly}, the de-equivariantization $(\C_{ad})_G$ of $\C_{ad}$ by $G$ is a modular category. Since $(\C_{ad})_G$ has FP dimension $75$, Corollary \ref{cor32} shows that $(\C_{ad})_G$ is pointed. But Lemma \ref{lem51} shows that $(\C_{ad})_G$ must contain simple objects of FP dimension $5$. This is a contradiction. Therefore, $\C$ must be pointed and we are done.
\end{proof}

\begin{rem}\label{rem52}
(1)\quad There exists non-group-theoretical integral modular categories whose FP dimension are even. Examples are given in \cite{bruillard2013classification}. In that paper, the authors studied non-group-theoretical $\mathbb{Z}_3$-graded $36$-dimensional modular categories and classified them, up to equivalence of fusion categories.

(2)\quad Natale studied low-dimensional modular categories in \cite{natale2013weakly}. She proved that if the FP dimension of a modular category is odd and less than $33075$ then this modular category is solvable.
\end{rem}


\section{Acknowledgements}
The author would like to thank Sonia Natale for her help with the proof of Theorem \ref{thm35}. The research of the author is supported by the Fundamental Research Funds for the Central Universities (KYZ201564), the Natural Science Foundation of China (11471282, 11201231) and the Qing Lan Project.




\end{document}